\documentclass[11pt, final]{article}
\usepackage{a4}

\usepackage{amsmath}
\usepackage{amssymb}
\usepackage{amsxtra}
\usepackage{amsthm}
\usepackage{bbm}			
\usepackage{cite}

\usepackage[left=3.5cm,right=3.5cm,top=3cm,bottom=3cm]{geometry}

\usepackage[plainpages=false,pdfpagelabels]{hyperref} 

\sloppy
\usepackage{showkeys}
\usepackage[latin1]{inputenc}       
\usepackage[T1]{fontenc}        
\usepackage{lmodern}            
\usepackage{graphicx}
\usepackage{subfigure}		
\usepackage{booktabs}		
\usepackage{multirow}
\usepackage{enumitem}		
\usepackage{listings} 	

\usepackage{colortbl}
\definecolor{dunkelgrau}{rgb}{0.8,0.8,0.8}
\definecolor{hellgrau}{rgb}{0.9,0.9,0.9}

\newcommand{\R}{\ensuremath{\mathbb{R}}}

\theoremstyle{plain}
\newtheorem{theorem}{Theorem}

\newtheorem{proposition}[theorem]{Proposition}

\theoremstyle{definition}
\newtheorem{remark}[theorem]{Remark}

\DeclareMathOperator*\dom{dom}%
\DeclareMathOperator*\ri{ri}%
\DeclareMathOperator*\argmin{arg\,min}%
\DeclareMathOperator*\cl{cl}%

\title{A double smoothing technique for solving unconstrained nondifferentiable convex optimization problems}

\author{Radu Ioan Bo\c t
\thanks {Faculty of Mathematics, Chemnitz University of Technology, D-09107 Chemnitz, Germany, e-mail: radu.bot@mathematik.tu-chemnitz.de. Research partially supported by DFG (German Research Foundation), project BO 2516/4-1.}
\and Christopher Hendrich
\thanks{Faculty of Mathematics, Chemnitz University of Technology, D-09107 Chemnitz, Germany, e-mail: christopher.hendrich@mathematik.tu-chemnitz.de.}
}
\date{\today}

\begin{document}
\maketitle

{\bf Abstract.} The aim of this paper is to develop an efficient algorithm for solving a class of unconstrained nondifferentiable convex optimization problems in finite dimensional spaces. To this end we formulate first its Fenchel dual problem and regularize it in two steps into a differentiable strongly convex one with Lipschitz continuous gradient. The doubly regularized dual problem is then solved via a fast gradient method with the aim of accelerating the resulting convergence scheme. The theoretical results are finally applied to an $l_1$ regularization problem arising in image processing.

{\bf Keywords.} Fenchel duality, regularization, fast gradient method, image processing

{\bf AMS subject classification.} 90C25, 90C46, 47A52

\section{Introduction}\label{sectionIntro}

In this paper we are interested in solving a specific class of unconstrained convex optimization problems in finite dimensional spaces. Generally, when characterizing optimality, the convexity allows to make use of powerful results in convex analysis, separation theorems and the (Fenchel) conjugate theory here included (see \cite{Zalinescu02,Rockafellar70,BauschkeCombettes11}). In convex optimization these are the ingredients for assigning a dual optimization problem via the perturbation approach to a primal one. When strong duality holds, solving the dual problem instead is a natural way to obtain an optimal solution to the primal problem, too. As weak duality is always fulfilled, for guaranteeing strong duality, so-called regularity conditions are needed (see, for example, \cite{Bot10,BotGradWanka09,Zalinescu02}).

When considering an unconstrained convex and differentiable minimization problem, there are already plenty of promising methods available (such as the steepest descent method, Newton's method or, in an appropriate setting, fast gradient methods, see \cite{Nesterov04}) for solving it. However, a lot of situations occur when the objective function of the optimization problem to be solved is nondifferentiable. Therefore, the convex subdifferential is used instead, not only as a tool for theoretically characterizing optimality, but also as the counterpart of the gradient in different numerical methods. However, the classical methods which solve unconstrained convex and nondifferentiable minimization problems have a rather slow convergence.

The aim of this paper is to develop in finite dimensional spaces an efficient algorithm for solving an unconstrained optimization problem having as objective the sum of a convex function with the composition of another convex function with a linear operator. To this end we are not relying on subgradient schemes, since their complexity can not be better than $O\left(\frac{1}{\epsilon^2}\right)$ iterations, where $\epsilon > 0$ is the desired accuracy for the objective value (see \cite{Nesterov04}). Instead, we show that it is possible to solve the corresponding Fenchel dual problem efficiently and to reconstruct in this way an approximately optimal solution to the primal one. To this end we make use of a double smoothing technique, in fact a generalization of the double smoothing approach employed by Devolder, Glineur and Nesterov in \cite{NesterovDoubleSmooth} and \cite{NesterovDoubleSmoothInfinite} for a special class of convex constrained optimization problems. This technique makes use of the structure of the dual problem and assumes the regularization of its objective function into a differentiable strongly convex one with Lipschitz continuous gradient. The regularized dual is then solved by a fast gradient method and this gives rise to a sequence of dual variables which solve the non-regularized dual objective in $O\left(\frac{1}{\epsilon} \ln\left( \frac{1}{\epsilon}\right)\right)$ iterations. In addition, the norm of the gradient of the objective of the regularized dual decreases by the same rate of convergence, a fact which is crucial in view of reconstructing an approximately optimal solution to the primal optimization problem.

The structure of the paper is the following. In the forthcoming section we introduce the class of convex optimization problems which we deal with throughout this paper, provide its Fenchel dual optimization problem and discuss some duality issues. In Section \ref{sectionDS} we apply the smoothing technique introduced in \cite{NesterovExcessiveGap05, NesterovSmoothMin05,NesterovSmoothing05} to the dual objective function in order to make it strongly convex and differentiable with Lipschitz continuous gradient. In Section \ref{sectionConvergence} the regularized dual problem is solved via an efficient fast gradient method. Additionally, we investigate the convergence of the dual iterates to an optimal dual solution with  a given accuracy and show how to reconstruct from it an approximately optimal primal solution. Finally, in Section \ref{sectionExample}, an $l_1$ regularized linear inverse problem is solved via the presented approach and an application in image processing is discussed.

\section{Preliminaries and problem formulation}\label{sectionPrelimFormulation}

In the following we are considering the space $\mathbb{R}^n$ endowed with the the Euclidean topology, i.\,e. $\left\| x \right\| = \sqrt{\left\langle x, x \right\rangle} = \sqrt{ x^T x }$ for all $x \in \mathbb{R}^n$. By $\mathbbm{1}^n$ we denote the vector in $\R^n$ with all entries equal to $1$. For a subset $C$ of $\mathbb{R}^n$ we denote by $\cl C$ and $\ri C$ its \textit{closure} and \textit{relative interior}, respectively. The indicator function of the set $C$ is the function $\delta_C : \R^n \rightarrow  \overline{\mathbb{R}} := \mathbb{R} \cup \left\{ \pm \infty \right\}$ defined by $\delta_C(x) = 0$ for $x \in C$ and $\delta_C(x) = +\infty$, otherwise. For a function $f: \mathbb{R}^n \rightarrow \overline{\mathbb{R}}$ we denote by $\dom f := \left\{ x \in \mathbb{R}^n : f(x) < +\infty \right\}$ its \textit{effective domain}. We call $f$ \textit{proper} if $\dom f \neq \emptyset$ and $f(x)>-\infty$ for all $x \in \mathbb{R}^n$. The \textit{conjugate function} of $f$ is $f^*:\mathbb{R}^n \rightarrow \overline{\mathbb{R}}$, $f^*(p)=\sup{\left\{ \left\langle p,x \right\rangle -f(x) : x\in\mathbb{R}^n \right\}}$ for all $p \in \mathbb{R}^n$. The \textit{biconjugate function} of $f$ is $f^{**} : \R^n \rightarrow \overline \R$, $f^{**}(x) = \sup{\left\{ \left\langle x,p \right\rangle -f^*(p) : p\in\mathbb{R}^n \right\}}$ and, when $f$ is proper, convex and lower semicontinuous, according to the Fenchel-Moreau Theorem, one has $f=f^{**}$. The \textit{(convex) subdifferential} of the function $f$ at $x \in \R^n$ is the set $\partial f(x) = \{p \in \R^n : f(y) - f(x) \geq p^T(y-x) \ \forall y \in \R^n\}$, if $f(x) \in \R$, and is taken to be the empty set, otherwise. For a linear operator $A: \mathbb{R}^n \rightarrow \mathbb{R}^m$, the operator $A^*: \mathbb{R}^m \rightarrow \mathbb{R}^n$ is the \textit{adjoint operator} of $A$ and is defined by $\left\langle A^*y,x  \right\rangle = \left\langle y,Ax  \right\rangle$ for all $x \in \mathbb{R}^n$ and all $y \in \mathbb{R}^m$.

For a nonempty, convex and closed set $C \subseteq \R^n$ we consider the \textit{projection operator} $\mathcal{P}_C : \mathbb{R}^n \rightarrow C$ defined as $x \mapsto \argmin_{z\in C}\left\| x-z \right\|$. Having two proper functions $f,\,g : \mathbb{R}^n \rightarrow \overline{\mathbb{R}}$, their \textit{infimal convolution} is defined by $f \Box g : \mathbb{R}^n \rightarrow \overline{\mathbb{R}}$, $(f \Box g) (x) = \inf_{y \in \mathbb{R}^n}\left\{ f(y) + g(x-y) \right\} $ for all $x \in \mathbb{R}^n$. The \textit{Moreau envelope} of the function $f : \R^n \rightarrow \overline \R$ of parameter $\gamma > 0$ is defined as the infimal convolution
$$^{\gamma}f(x) := f \Box \left(\frac{1}{2\gamma}\left\| \cdot \right\|^2\right)(x) = \inf_{y \in \R^n} \left \{f(y) + \frac{1}{2\gamma}\|x-y\|^2 \right\} \ \forall x \in \R^n.$$
We say that the function $f : \R^n \rightarrow \overline \R$ is \textit{strongly convex} with parameter $\rho > 0$ if for all $x,y \in \R^n$ and all $\lambda \in \left(0,1\right)$ it holds
$$f(\lambda x + (1-\lambda)y) \leq \lambda f(x) + (1-\lambda)f(y) - \frac{\rho}{2}\lambda(1-\lambda)\|x-y\|^2.$$

In this work we are dealing with optimization problems of the type
\begin{equation}\label{opt-problem:primal}
\hspace{-1.8cm}(P) \quad \quad \inf_{x \in \mathbb{R}^n}{\left\{f(x)+g(Ax)\right\}},
\end{equation}
where $f:\mathbb{R}^n \rightarrow \overline{\mathbb{R}}$ and $g:\mathbb{R}^m \rightarrow \overline{\mathbb{R}}$ are proper, convex and lower semicontinuous functions and $A:\mathbb{R}^n \rightarrow \mathbb{R}^m$ is a linear operator fulfilling $A(\dom f) \cap \dom g \neq \emptyset$. Furthermore, we assume that $\dom f$ and $\dom g$ are \textit{bounded}.

\begin{remark}
\label{double-smooth-remark-domf-or-domg-unbounded}
The assumption that $\dom f$ and $\dom g$ are bounded can be weakened in the sense that it is sufficient to assume that $\dom f$ is bounded. In this situation,
in the formulation of $(P)$ the function $g$ can be replaced by $g + \delta_{\cl(A(\dom f))}$, which is a proper, convex and lower semicontinuous function with bounded effective domain.

On the other hand, one should also notice that the counterparts of the assumptions considered in \cite{NesterovDoubleSmooth, NesterovDoubleSmoothInfinite} in our setting would ask for closedness for the effective domains of the functions $f$ and $g$, too. However, we will be able to employ the double smoothing technique for $(P)$ without being obliged to impose this assumption.
\end{remark}

According to \cite{Bot10,BotGradWanka09}, the Fenchel dual problem to $(P)$ is nothing else than
\begin{equation}\label{opt-problem:dual}
\hspace{-1.8cm}(D) \quad \quad \sup_{p \in \mathbb{R}^m}{ \left\{ -f^*(A^*p)-g^*(-p) \right\} },
\end{equation}
where $f^*:\mathbb{R}^n \rightarrow \overline{\mathbb{R}}$ and $g^*:\mathbb{R}^m \rightarrow \overline{\mathbb{R}}$ denote the conjugate functions of $f$ and $g$, respectively. We denote the optimal objective values of the optimization problems $(P)$ and $(D)$ by $v(P)$ and $v(D)$, respectively.

The conjugate functions of $f$ and $g$ can be written as
\begin{align*}
	f^*(q)
	= \sup_{x \in \dom f}{ \left\{ \left\langle  q,x \right\rangle  -f(x) \right\} }
	= -\inf_{x \in \dom f}{ \left\{ \left\langle -q,x \right\rangle  +f(x) \right\} } \ \forall q \in \R^n
\end{align*}
and
\begin{align*}
	g^*(p)
	= \sup_{x \in \dom g}{ \left\{ \left\langle  p,x \right\rangle  -g(x) \right\} }
	= -\inf_{x \in \dom g}{ \left\{ \left\langle  -p,x \right\rangle  +g(x) \right\} } \ \forall p \in \R^m,
\end{align*}
respectively. In the framework considered above, according to \cite[Proposition A.8]{Bertsekas99}, the optimization problems arising in the formulation of $f^*(q)$ for all $q \in \R^n$ and $g^*(p)$ for all $p \in \R^m$ are solvable, fact which implies that $\dom f^* = \mathbb{R}^n$ and $\dom g^* = \mathbb{R}^m$, respectively.

By writing the dual problem $(D)$ equivalently as the infimum optimization problem
$$\inf_{p \in \R^m} \{f^*(A^*p) + g^*(-p)\},$$
one can easily see that the Fenchel dual problem of the latter is
$$\sup_{x \in \R^n} \{-f^{**}(x) - g^{**}(Ax)\}, $$
which, by the Fenchel-Moreau Theorem, is nothing else than
$$\sup_{x \in \R^n} \{-f(x) - g(Ax)\}.$$
In order to guarantee strong duality for this primal-dual pair it is sufficient to ensure that (see, for instance, \cite{Bot10}) $0 \in \ri(A^*(\dom g^*) + \dom f^*)$. As $f^*$ has full domain, this regularity condition is automatically fulfilled, which means that $v(D) = v(P)$ and the primal optimization problem $(P)$ has an optimal solution. Due to the fact that $f$ and $g$ are proper and $A(\dom f) \cap \dom g \neq \emptyset$, this further implies $v(D)=v(P) \in \mathbb{R}$. Later we will assume that the dual problem $(D)$ has an optimal solution, too, and that an upper bound of its norm is known.

Denote by $\theta:\mathbb{R}^m \rightarrow \mathbb{R}$, $\theta(p)=f^*(A^*p)+g^*(-p)$, the objective function of $(D)$. Hence, the latter can be equivalently written as
\begin{equation}
\label{opt-problem:dual-in-minimization-form}
\hspace{-1.8cm}(D) \quad \quad -\inf_{p \in \mathbb{R}^m}{ \theta(p) }.
\end{equation}
Since in general we can neither guarantee the smoothness of $p \mapsto f^*(A^*p)$ nor of $p\mapsto g^*(-p)$, the dual problem $(D)$ is a nondifferentiable convex optimization problem. Our goal is to solve this problem efficiently and to obtain from here an optimal solution to $(P)$. To this end, we are not relying on subgradient-type schemes, due to their slow rates of convergence equal to $O\left(\frac{1}{\epsilon^2}\right)$, but we are applying instead some smoothing techniques introduced in \cite{NesterovExcessiveGap05,NesterovSmoothMin05,NesterovSmoothing05}. More precisely, we regularize first the functions $p \mapsto f^*(A^*p)$ and $p\mapsto g^*(-p)$, by taking into account the definitions of the two conjugates, in order to obtain a smooth approximation of the objective of \eqref{opt-problem:dual-in-minimization-form} with a Lipschitz continuous gradient. Then we solve the regularized dual problem by making use of a fast gradient method (see \cite{NesterovSmoothMin05}) and generate in this way a sequence of dual variables which approximately solves the problem $(D)$ with a rate of convergence of $O\left(\frac{1}{\epsilon} \right)$. Since similar properties cannot be ensured for the primal optimization problem $(P)$, the solving of this problem being actually our goal, we apply a second regularization  to the objective function of \eqref{opt-problem:dual-in-minimization-form}. This will allow us to make use of a fast gradient method for smooth and strongly convex functions given in \cite{Nesterov04} for solving the regularized dual, which implicitly will solve both the dual problem $(D)$ and the primal problem $(P)$ approximately in $O\left(\frac{1}{\epsilon}\ln\left(\frac{1}{\epsilon}\right)\right)$ iterations.

\section{The double smoothing approach}\label{sectionDS}

\subsection{First smoothing}\label{subsectionFirstSmooth}
For a positive real number $\rho > 0$ the function $p \mapsto f^*(A^*p) = \sup_{x \in \mathbb{R}^n}{ \left\{ \left\langle  A^*p,x \right\rangle  -f(x) \right\}}$ can be approximated by
\begin{equation}
	\label{opt-problem:f*rho}
	f_{\rho}^*(A^*p) = \sup_{x \in \mathbb{R}^n}{ \left\{ \left\langle  A^*p,x \right\rangle  -f(x) - \frac{\rho}{2} \left\| x \right\|^2 \right\} },
\end{equation}
while, given $\mu > 0$, the function $p \mapsto g^*(-p) = \sup_{x \in \R^n}{ \left\{ \left\langle  -p,x \right\rangle  -g(x) \right\} }$ can be approximated by
\begin{equation}
	\label{opt-problem:g*mu}
	g_{\mu}^*(-p)	= \sup_{x \in \mathbb{R}^m}{ \left\{ \left\langle  -p,x \right\rangle  -g(x) - \frac{\mu}{2} \left\| x \right\|^2 \right\} }.
\end{equation}
For each $p \in \R^m$ the maximization problems which occur in the formulations of $f_{\rho}^*(A^*p)$ and $g_{\mu}^*(-p)$ have unique solution (see, for instance, \cite[Proposition A.8 and Proposition B.10]{Bertsekas99}), since their objectives are proper, strongly concave (see \cite[Proposition B.1.1.2]{HirLem01}) and upper semicontinuous functions.

In order to determine  the gradient of the functions $p\mapsto f^*(A^*p)$ and $p \mapsto g^*(-p)$, we are going to make use of the Moreau envelope of the functions $f$ and $g$, respectively. Indeed, for all $p \in \mathbb{R}^m$ we have
\begin{align*}
	-f_{\rho}^*(A^*p) &= -\sup_{x \in \mathbb{R}^n}{ \left\{ \left\langle  A^*p,x \right\rangle  -f(x) -\frac{\rho}{2} \left\| x \right\|^2 \right\}} \\
	&= \inf_{x \in \mathbb{R}^n}{ \left\{ -\left\langle  A^*p,x \right\rangle  +f(x) +\frac{\rho}{2} \left\| x \right\|^2 \right\}} \\
	&= \inf_{x \in \mathbb{R}^n}{ \left\{ f(x)  +\frac{\rho}{2} \left\| \frac{A^*p}{\rho} -x \right\|^2 \right\}} -\frac{\left\| A^*p\right\|^2}{2\rho}
	= {}^{\frac{1}{\rho}}f\left(\frac{A^*p}{\rho}\right) -\frac{\left\| A^*p\right\|^2}{2\rho}.
\end{align*}
As the Moreau envelope is continuously differentiable (see \cite[Proposition 12.29]{BauschkeCombettes11}), $p \mapsto -f_{\rho}^*(A^*p)$ is continuously differentiable, as well, and it holds for all $p \in \R^m$
\begin{align*}
	- \nabla (f_{\rho}^*\circ A^*)(p) = \frac{A}{\rho}\, \nabla\, {}^{\frac{1}{\rho}}f\left(\frac{A^*p}{\rho}\right) -\frac{AA^*p}{\rho}
	= \frac{A}{\rho}\left( \rho \left( \frac{A^*p}{\rho} - x_{\rho,p} \right) \right)  -\frac{AA^*p}{\rho}
	= -Ax_{\rho,p},
\end{align*}
which means that
\begin{align*}
	\nabla (f_{\rho}^*\circ A^*)(p) = Ax_{\rho,p},
\end{align*}
where $x_{\rho,p} \in \R^n$ is the \textit{proximal point} of parameter $\tfrac{1}{\rho}$ of $f$ at $\frac{A^*p}{\rho}$, namely the unique element in $\R^n$ fulfilling
$${}^{\frac{1}{\rho}}f\left(\frac{A^*p}{\rho}\right) =  f(x_{\rho,p})  +\frac{\rho}{2} \left\| \frac{A^*p}{\rho} -x_{\rho,p} \right\|^2.$$
By taking into account the nonexpansiveness of the proximal point mapping (see \cite[Proposition 12.27]{BauschkeCombettes11}), for $p,q \in \mathbb{R}^m$ it holds
\begin{align*}
	\left\| \nabla (f_{\rho}^*\circ A^*)(p) - \nabla (f_{\rho}^*\circ A^*)(q) \right\| & = \left\| Ax_{\rho,p} - Ax_{\rho,q} \right\| \leq \left\| A \right\| \left\| x_{\rho,p} - x_{\rho,q} \right\| \\
	& \leq \left\| A \right\| \left\| \frac{A^*p}{\rho} - \frac{A^*q}{\rho} \right\| \leq \frac{\left\| A \right\|^2}{\rho} \left\| p - q \right\|,
\end{align*}
thus $\frac{\left\| A \right\|^2}{\rho}$ is the Lipschitz constant of $p \mapsto \nabla (f_{\rho}^*\circ A^*)(p)$.

For the function $p \mapsto g^*(-p)$ one can proceed analogously. For all $p \in \R^m$ one has
\begin{align*}
	-g_{\mu}^*(-p) &= \inf_{x \in \mathbb{R}^m}{ \left\{ g(x)  +\frac{\mu}{2} \left\| -\frac{p}{\mu} -x \right\|^2 \right\}} -\frac{\left\| p\right\|^2}{2\mu}
	= {}^{\frac{1}{\mu}}g\left(-\frac{p}{\mu}\right) -\frac{\left\| p\right\|^2}{2\mu},
\end{align*}
which is a continuously differentiable function such that
\begin{align*}
	- \nabla g_{\mu}^*(-\cdot)(p) = -\frac{1}{\mu}\, \nabla\, {}^{\frac{1}{\mu}}g\left(-\frac{p}{\mu}\right) -\frac{p}{\mu}
	= -\frac{1}{\mu}\left( \mu \left( -\frac{p}{\mu} - x_{\mu,p} \right) \right)  -\frac{p}{\mu}
	= x_{\mu,p},
\end{align*}
thus,
\begin{align*}
	\nabla g_{\mu}^*(-\cdot)(p) = -x_{\mu,p},
\end{align*}
where $x_{\mu,p} \in \R^m$ is the \textit{proximal point} of parameter $\tfrac{1}{\mu}$ of $g$ at $-\frac{p}{\mu}$, namely the unique element in $\R^m$ fulfilling
$${}^{\frac{1}{\mu}}g\left(-\frac{p}{\mu}\right) =  g(x_{\mu,p})  +\frac{\mu}{2} \left\| -\frac{p}{\mu} -x_{\mu,p} \right\|^2.$$
For $p,q \in \mathbb{R}^m$ it holds
\begin{align*}
	\left\| \nabla g_{\mu}^*(-\cdot)(p) - \nabla g_{\mu}^*(-\cdot)(q) \right\|
	&= \left\| -x_{\mu,p} + x_{\mu,q} \right\| \leq \left\| -\frac{p}{\mu} + \frac{q}{\mu} \right\| \leq \frac{1}{\mu} \left\| -p + q \right\|,
\end{align*}
so that $\frac{1}{\mu}$ is the Lipschitz constant of $p \mapsto \nabla g_{\mu}^*(-\cdot)(p)$.

\begin{remark}
\label{remarkRegularizationUnnecessary}
If $f$ is strongly convex with parameter $\rho > 0$, there is no need to apply the first regularization for $p \mapsto f^*(A^*p)$, as this function is already differentiable with a Lipschitz continuous gradient having a Lipschitz constant given by $\frac{\left\| A \right\|^2}{\rho}$. The same applies for $p \mapsto g^*(-p)$, if $g$ is strongly convex with parameter $\mu >0$, in this case the Lipschitz constant of its gradient being given by $\frac{1}{\mu}$.
\end{remark}

The constants $D_f:=\sup\left\{ \frac{\left\| x \right\|^2}{2} : x \in \dom f  \right\} $ and $D_g:=\sup \left\{ \frac{\left\| x \right\|^2}{2} : x \in \dom g  \right\}$ will play an important role in the upcoming convergence schemes. Since $\dom f$ and $\dom g$ are bounded, $D_f$ and $D_g$ are real numbers.
\begin{proposition}
	\label{corollary:f*rho and g*mu inequality}
	For all $p\in\mathbb{R}^m$ it holds
	\begin{align*}
		f_{\rho}^*(A^*p) &\leq f^*(A^*p) \leq f_{\rho}^*(A^*p) + \rho D_f  \ \mbox{and} \ g_{\mu}^*(-p) \leq g^*(-p) \leq g_{\mu}^*(-p) + \mu D_g.
\end{align*}
\end{proposition}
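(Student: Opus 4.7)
The plan is to establish each of the four inequalities directly from the definitions of $f^*$, $f_\rho^*$, $g^*$ and $g_\mu^*$, together with the definitions of the constants $D_f$ and $D_g$. Since the two statements are completely parallel, I would prove the $f$-version in detail and remark that the $g$-version follows by the same argument (with $A^*p$ replaced by $-p$, $\rho$ by $\mu$, and $D_f$ by $D_g$).

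For the lower bound $f_\rho^*(A^*p)\leq f^*(A^*p)$, I would simply observe that for every $x\in\R^n$ the quantity $\tfrac{\rho}{2}\|x\|^2$ is nonnegative, so
\[
\<A^*p,x\>-f(x)-\tfrac{\rho}{2}\|x\|^2\leq \<A^*p,x\>-f(x),
\]
and taking the supremum over $x\in\R^n$ on both sides yields the inequality.

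For the upper bound $f^*(A^*p)\leq f_\rho^*(A^*p)+\rho D_f$, I would use the trivial identity
\[
\<A^*p,x\>-f(x)=\bigl(\<A^*p,x\>-f(x)-\tfrac{\rho}{2}\|x\|^2\bigr)+\tfrac{\rho}{2}\|x\|^2,
\]
valid for every $x\in\R^n$. The effective supremum in the definition of $f^*(A^*p)$ is taken over $\dom f$ (since $f\equiv+\infty$ elsewhere), so for every $x\in\dom f$ the first bracket is bounded from above by $f_\rho^*(A^*p)$ and the second term is bounded from above by $\rho D_f$ by the definition of $D_f$. Taking the supremum over $x\in\dom f$ gives the claim. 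The boundedness of $\dom f$, assumed in Section~\ref{sectionPrelimFormulation}, is exactly what guarantees $D_f\in\R$, so the bound is meaningful.

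There is no real obstacle here: the proof is a one-line estimate in each direction. The only point that requires a moment of care is noticing that the supremum defining $f^*(A^*p)$ can be restricted to $\dom f$, which is needed to invoke the bound $\tfrac{\|x\|^2}{2}\leq D_f$. The argument for $g_\mu^*$ and $g^*$ is identical, using that $\dom g$ is bounded and $D_g=\sup\{\tfrac{\|x\|^2}{2}:x\in\dom g\}<\infty$.
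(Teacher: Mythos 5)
Your proof is correct and follows essentially the same route as the paper: both obtain the lower bound by dropping the nonnegative term $\tfrac{\rho}{2}\|x\|^2$ (the paper does this at the maximizer $x_{\rho,p}$, you do it pointwise before taking suprema, which is an immaterial difference), and both obtain the upper bound by splitting the supremum over $\dom f$ of the sum into the sum of the two suprema, the second of which is $\rho D_f$ by definition. Nothing is missing.
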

\begin{proof} For $p \in \mathbb{R}^m$ one has
\begin{align*}
		f_{\rho}^*(A^*p) &= \left\langle  A^*p,x_{\rho,p} \right\rangle  -f(x_{\rho,p}) - \frac{\rho}{2} \left\| x_{\rho,p} \right\|^2
											\leq \left\langle  A^*p,x_{\rho,p} \right\rangle  -f(x_{\rho,p}) \leq  f^*(A^*p) \\
									&\leq \sup_{x \in \dom f}{ \left\{ \left\langle  A^*p,x \right\rangle  -f(x) -\frac{\rho}{2} \left\| x \right\|^2 \right\} } + \sup_{x \in \dom f}{ \left\{\frac{\rho}{2} \left\| x \right\|^2\right\} } \\
									&= f_{\rho}^*(A^*p) + \rho D_f.
\end{align*}
The other estimates follow similarly.
\end{proof}

For $\rho >0$ and $\mu >0$ let be $\theta_{\rho,\mu}:\mathbb{R}^m \rightarrow \mathbb{R}$ defined by $\theta_{\rho,\mu}(p)=f_{\rho}^*(A^*p) + g_{\mu}^*(-p)$. The function $\theta_{\rho,\mu}$ is differentiable with a Lipschitz continuous gradient
$$\nabla \theta_{\rho,\mu}(p) = \nabla(f_{\rho}^*\circ A^*)(p) + \nabla g_{\mu}^*(-\cdot)(p)=Ax_{\rho,p} - x_{\mu,p}$$
having as Lipschitz constant $L(\rho,\mu) := \frac{\left\| A \right\|^2}{\rho} + \frac{1}{\mu}$.

Summing up the inequalities from Proposition \ref{corollary:f*rho and g*mu inequality}, we get
\begin{equation}
		\label{inequality:theta-rho-mu and theta relation}
		\theta_{\rho,\mu}(p) \leq \theta (p) \leq \theta_{\rho,\mu}(p) + \rho D_f + \mu D_g \quad \forall p \in \mathbb{R}^m.
\end{equation}
Further, for $p \in \mathbb{R}^m$ we have
\begin{align*}
		\theta_{\rho,\mu}(p)&=	f_{\rho}^*(A^*p) + g_{\mu}^*(-p) \\
												&= \left\langle  p,Ax_{\rho,p} \right\rangle  -f(x_{\rho,p}) - \frac{\rho}{2} \left\| x_{\rho,p} \right\|^2
												 -\left\langle  p,x_{\mu,p} \right\rangle  -g(x_{\mu,p}) - \frac{\mu}{2} \left\| x_{\mu,p} \right\|^2
\end{align*}
and from here
\begin{align*}
		f(x_{\rho,p}) + g(x_{\mu,p}) -v(D) = \left\langle  p,\nabla \theta_{\rho,\mu}(p) \right\rangle + (-v(D) -\theta_{\rho,\mu}(p) )  - \frac{\rho}{2} \left\| x_{\rho,p} \right\|^2 - \frac{\mu}{2} \left\| x_{\mu,p} \right\|^2.
\end{align*}
Thus
\begin{equation}\label{ineq:primal-dual}
		\left| f(x_{\rho,p}) + g(x_{\mu,p})-v(D) \right|
		\leq 	\left| \left\langle  p,\nabla \theta_{\rho,\mu}(p) \right\rangle \right| + \left|v(D) + \theta_{\rho,\mu}(p) \right|
					+ \rho D_f + \mu D_g.
\end{equation}

Since $v(P) \geq v(D)$ (weak duality) and $\left| \theta_{\rho,\mu}(p) + v(D) \right| \overset{\eqref{inequality:theta-rho-mu and theta relation}}{\leq} \left| \theta(p) + v(D) \right| + \rho D_f + \mu D_g$, we conclude that
\begin{equation}
	\label{inequality:first-smoothing-primal-accuray}
	f(x_{\rho,p}) + g(x_{\mu,p}) - v(P)	\leq \left| \left\langle  p,\nabla \theta_{\rho,\mu}(p) \right\rangle \right|
																		 + \left|\theta(p) + v(D) \right| + 2\rho D_f + 2\mu D_g.
\end{equation}
Following the ideas in \cite{NesterovDoubleSmooth}, we further consider for the regularized optimization problem (for $\rho > 0$ and $\mu > 0$)
\begin{equation}
		\label{opt-problem:theta-rho-mu as objective function}
		\inf_{p\in\mathbb{R}^m}{\theta_{\rho,\mu}(p)}
\end{equation}
the following fast gradient scheme (see \cite[scheme (3.11)]{NesterovSmoothMin05}):
\begin{align*}
			\text{Init.: } &\text{Choose }w_0 \in \mathbb{R}^m \text{ and set }k:=0. \\
		\text{For } k\geq 0:\ &\text{Compute}\ \theta_{\rho,\mu}(w_k) \text{ and } \nabla \theta_{\rho,\mu}(w_k). \\
			 &\text{Find }p_k=\underset{w\in\mathbb{R}^m}{\argmin }{\left\{\left\langle \nabla \theta_{\rho,\mu}(w_k),w-w_k \right\rangle + \frac{L(\rho,\mu)}{2} \left\| w-w_k \right\|^2\right\}}. \\
			 &\text{Find }z_k=\underset{w\in\mathbb{R}^m}{\argmin } \bigg \{L(\rho,\mu)\left\|w_0-w\right\|^2 \\
            & \hspace{4cm} + \sum_{i=0}^{k}{\frac{i+1}{2}\left[ \theta_{\rho,\mu}(w_i) + \left\langle \nabla \theta_{\rho,\mu}(w_i),w-w_i \right\rangle\right]} \bigg \}. \\			&\text{Set }w_{k+1}:= \frac{2}{k+3}z_k  + \frac{k+1}{k+3}p_k.
\end{align*}
Assuming that $p_S^* \in \mathbb{R}^m$ is an \textit{optimal solution} of \eqref{opt-problem:theta-rho-mu as objective function}, it follows that
$\nabla \theta_{\rho,\mu}(p_S^*)=0$. Thus, due to the properties of the above convergence scheme provided in \cite{NesterovSmoothMin05}, we have
 \begin{equation}
		\label{inequality:pseudo-fast-gradient-scheme}
		\theta_{\rho,\mu}(p_k)-\theta_{\rho,\mu}(p_S^*) \leq \frac{4L(\rho,\mu) \left\| p_0 - p_S^* \right\|^2 }{(k+1)(k+2)} \quad \forall\,k\geq 0.
 \end{equation}
When $p^* \in \R^m$ is an \textit{optimal solution} to $(D)$, from \eqref{inequality:theta-rho-mu and theta relation} we get that $\theta_{\rho,\mu}(p_k) \geq \theta(p_k) - \rho D_f - \mu D_g$ for all $k\geq 0$ and $\theta_{\rho,\mu}(p_S^*) \leq \theta_{\rho,\mu}(p^*) \leq \theta(p^*) = -v(D)$. Hence, we obtain
\begin{align*}
			\theta_{\rho,\mu}(p_k) - \theta_{\rho,\mu}(p_S^*) \geq \theta(p_k) - \rho D_f  - \mu D_g +v(D),
\end{align*}
which further implies that
\begin{align*}
			\theta(p_k) + v(D) \leq \theta_{\rho,\mu}(p_k) - \theta_{\rho,\mu}(p_S^*) + \rho D_f  + \mu D_g
			\overset{\eqref{inequality:pseudo-fast-gradient-scheme}}{\leq}  \frac{4L(\rho,\mu) \left\| p_0 - p_S^* \right\|^2 }{(k+1)(k+2)} + \rho D_f  + \mu D_g
\end{align*}
for all $k\geq 0$. Now, in order to guarantee $\theta(p_k) + v(D) \leq \epsilon$, namely that $p_k$ is a solution of the dual problem $(D)$ with $\epsilon$-accuracy, we can force all three terms in the above inequality to be less than or equal to $\frac{\epsilon}{3}$. By taking
\begin{align*}
			\rho := \rho(\epsilon) = \frac{\epsilon}{3 D_f} \ \mbox{and} \ \mu := \mu(\epsilon) = \frac{\epsilon}{3 D_g},
\end{align*}
this means that the amount of iterations $k$ needed in order to satisfy $\epsilon$-optimality for the dual iterate depends on the relation
\begin{align*}
			\frac{4L(\rho,\mu) \left\| p_0 - p_S^* \right\|^2 }{(k+1)(k+2)} \leq \frac{\epsilon}{3}.
\end{align*}
Since the Lipschitz constant $L(\rho,\mu)=\frac{\left\| A \right\|^2}{\rho} + \frac{1}{\mu}$ is of order $\frac{1}{\epsilon}$, the rate of convergence for $\theta(p_k) +v(D) \leq \epsilon$ is $O\left(\frac{1}{\epsilon}\right)$.

 Further, according to \eqref{inequality:first-smoothing-primal-accuray}, in order to gain an accuracy for the primal optimization problem  proportional to $\epsilon>0$, one has only to ensure that $\left| \left\langle  p_k,\nabla \theta_{\rho,\mu}(p_k) \right\rangle \right|$ is lower than or equal to $O(\epsilon)$. However, by \cite[Theorem 2.1.5]{Nesterov04}, we have
 \begin{equation*}
		\left\| \nabla \theta_{\rho,\mu}(p_k) \right\|^2 \leq 2 L(\rho,\mu)(\theta_{\rho,\mu}(p_k)-\theta_{\rho,\mu}(p_S^*)),
 \end{equation*}
hence, from \eqref{inequality:pseudo-fast-gradient-scheme},
 \begin{equation*}
		\left\| \nabla \theta_{\rho,\mu}(p_k) \right\| \leq \frac{2\sqrt{2}L(\rho,\mu) \left\| p_0 - p_S^* \right\| }{\sqrt{\left(k+1\right) \left(k+2\right) }}\quad \forall\,k\geq0.
 \end{equation*}
This means that the norm of the gradient $\nabla \theta_{\rho,\mu}(p_k)$ decreases with an order being $O\left(\frac{1}{\epsilon^2}\right)$. In order to achieve for the primal optimization problem an accuracy  which is proportional to $\epsilon$ via the estimation  \eqref{inequality:first-smoothing-primal-accuray}, we need $k=O\left(\frac{1}{\epsilon^2}\right)$ iterations.   This convergence is slow as compared to our aimed rate of convergence of $O\left(\frac{1}{\epsilon}\ln{\left(\frac{1}{\epsilon}\right)}\right)$ and it is not better than the rate of convergence of the subgradient approach.

From another point of view, in order to get a feasible solution to the primal optimization problem $(P)$, it is necessary to investigate the distance between $Ax_{\rho,p_k}$ and $x_{\mu,p_k}$, since the functions $f$ and $g \circ A$ have to share the same argument (which would be $x_{\rho,p_k}$, if $\left\| \nabla \theta_{\rho,\mu}(p_k) \right\| = \left\| Ax_{\rho,p_k} - x_{\mu,p_k} \right\| = 0$). Therefore, the norm of the gradient $\left\| \nabla \theta_{\rho,\mu}(p_k) \right\|$ is an indicator for an approximately feasible solution. Thus, in order to obtain an approximately optimal solution to $(P)$, it is not sufficient to ensure the convergence for $\theta(p_k) + v(D)$ to zero, but also a good convergence for the decrease of $\left\| \nabla \theta_{\rho,\mu}(p_k) \right\|$.

\subsection{Second smoothing}\label{subsectionSecondSmoothing}

In the following a second regularization is applied to $\theta_{\rho,\mu}$, as done in \cite{NesterovDoubleSmooth, NesterovDoubleSmoothInfinite}, in order to make it strongly convex, fact which will allow us to use a fast gradient scheme with a better convergence rate for $\left\| \nabla \theta_{\rho,\mu} \right\|$. Therefore, adding the strongly convex function $\frac{\kappa}{2} \left\| \cdot \right\|^2$ to $\theta_{\rho,\mu}$ for some positive real number $\kappa$ gives rise to the following regularization of the objective function
\begin{equation*}
\theta_{\rho,\mu,\kappa} : \R^m \rightarrow \R, \ \theta_{\rho,\mu,\kappa}(p) := \theta_{\rho,\mu}(p) + \frac{\kappa}{2} \left\| p \right\|^2
																= f_{\rho}^*(A^*p) + g_{\mu}^*(-p)+ \frac{\kappa}{2} \left\| p \right\|^2,
\end{equation*}
which is strongly convex with modulus $\kappa > 0$ (cf. \cite[Proposition B.1.1.2]{HirLem01}). We further deal with the optimization problem
\begin{equation}
		\label{opt-problem:second-regularization}
		\inf_{p \in \mathbb{R}^m}{\theta_{\rho,\mu,\kappa}(p)}.
\end{equation}
By taking into account \cite[Proposition A.8 and Proposition B.10]{Bertsekas99}, the optimization problem \eqref{opt-problem:second-regularization} has an unique element. The function $\theta_{\rho,\mu,\kappa}$ is differentiable and for all $p \in \R^m$ it holds
\begin{align*}
	\nabla \theta_{\rho,\mu,\kappa}(p) 	= \nabla \left( \theta_{\rho,\mu} (\cdot)+ \frac{\kappa}{2} \left\| \cdot \right\|^2 \right) (p)
																			= Ax_{\rho,p} - x_{\mu,p} + \kappa p.
\end{align*}
This gradient is Lipschitz continuous with constant $L(\rho,\mu,\kappa):=\frac{\left\| A \right\|^2}{\rho} + \frac{1}{\mu} + \kappa$.

\section{Solving the doubly regularized dual problem}\label{sectionConvergence}

\subsection{An appropriate fast gradient method}

Denote by $p_{DS}^*$ the unique optimal solution to optimization problem \eqref{opt-problem:second-regularization} and by $\theta_{\rho,\mu,\kappa}^* := \theta_{\rho,\mu,\kappa}(p_{DS}^*)$ its optimal objective value. Further, let $p^* \in \mathbb{R}^m$ be an optimal solution to the dual optimization problem $(D)$ and assume that the upper bound
\begin{equation}
		\label{opt-problem:dual-solution-upper-bound}
		\left\| p^* \right\| \leq R																					
\end{equation}
is available for some nonzero $R \in \mathbb{R}_+$.

We apply to the doubly regularized dual problem \eqref{opt-problem:second-regularization}
the fast gradient method \cite[Algorithm 2.2.11]{Nesterov04}
\begin{eqnarray}
		\label{doublesmooth:opt-sheme}
		\text{Init.:}& &\text{Set}\ w_0 = p_0:=0 \in \mathbb{R}^m \notag \\
		\text{For } k\geq 0:& &\text{Set}\ p_{k+1} := w_k - \frac{1}{L(\rho,\mu,\kappa)}\nabla \theta_{\rho,\mu,\kappa}(w_k).\\
			& &\text{Set}\ w_{k+1} := p_{k+1} + \frac{\sqrt{L(\rho,\mu,\kappa)} - \sqrt{\kappa}}{\sqrt{L(\rho,\mu,\kappa)} + \sqrt{\kappa}} (p_{k+1}-p_k).\notag
\end{eqnarray}
By taking into account \cite[Theorem 2.2.3]{Nesterov04} we obtain a sequence $(p_k)_{k \geq 0} \subseteq \mathbb{R}^m$ satisfying
\begin{align}
		\theta_{\rho,\mu,\kappa}(p_k) - \theta_{\rho,\mu,\kappa}^*
		&\leq \left( \theta_{\rho,\mu,\kappa}(p_0) - \theta_{\rho,\mu,\kappa}^* + \frac{\kappa}{2} \left\| p_0 -p_{DS}^* \right\|^2\right)
					\left( 1-\sqrt{\frac{\kappa}{L(\rho,\mu,\kappa)}} \right)^k \notag \\
		\label{opt-scheme:objective-function1}
		&\leq  (\theta_{\rho,\mu,\kappa}(p_0) - \theta_{\rho,\mu,\kappa}^* + \frac{\kappa}{2} \left\| p_0 -p_{DS}^* \right\|^2)
																																		 \text{e}^{ -k \sqrt{\frac{\kappa}{L(\rho,\mu,\kappa)}} } \\
		\label{opt-scheme:objective-function2}																																 
		&\leq  2 (\theta_{\rho,\mu,\kappa}(p_0) - \theta_{\rho,\mu,\kappa}^*)
																																		 \text{e}^{ -k \sqrt{\frac{\kappa}{L(\rho,\mu,\kappa)}} } \ \forall k \geq 0,
\end{align}
while the last inequality is a consequence of \cite[Theorem 2.1.8]{Nesterov04}. Since $p_{DS}^*$ is the unique optimal solution to \eqref{opt-problem:second-regularization}, we have $\nabla \theta_{\rho,\mu,\kappa}(p_{DS}^*)=0$ and therefore \cite[Theorem 2.1.5]{Nesterov04} yields
\begin{align*}
		\frac{1}{2 L(\rho,\mu,\kappa)}\left\| \nabla \theta_{\rho,\mu,\kappa}(p_k) \right\|^2
		\leq 	\theta_{\rho,\mu,\kappa}(p_k) - \theta_{\rho,\mu,\kappa}^*
		\overset{\eqref{opt-scheme:objective-function2}}{\leq} 2 (\theta_{\rho,\mu,\kappa}(p_0) - \theta_{\rho,\mu,\kappa}^*)
\text{e}^{ -k \sqrt{\frac{\kappa}{L(\rho,\mu,\kappa)}}},																																 
\end{align*}
which implies
\begin{align}
		\label{opt-scheme:norm-of-gradient}
		\left\| \nabla \theta_{\rho,\mu,\kappa}(p_k) \right\|^2
		\leq 	4 L(\rho,\mu,\kappa) (\theta_{\rho,\mu,\kappa}(p_0) - \theta_{\rho,\mu,\kappa}^*)
					\text{e}^{ -k \sqrt{\frac{\kappa}{L(\rho,\mu,\kappa)}} }  \ \forall k \geq 0.																																 
\end{align}
Due to the strong convexity of $\theta_{\rho,\mu,\kappa}$ with modulus $\kappa > 0$, Theorem 2.1.8 in \cite{Nesterov04} states
\begin{align}
	\label{opt-scheme:norm-of-arguments-additional}
	\frac{\kappa}{2} \left\| p_k -p_{DS}^* \right\|^2
	\leq \theta_{\rho,\mu,\kappa}(p_k) - \theta_{\rho,\mu,\kappa}^*
	\overset{\eqref{opt-scheme:objective-function2}}{\leq} 2 (\theta_{\rho,\mu,\kappa}(p_0) - \theta_{\rho,\mu,\kappa}^*)
\text{e}^{ -k\sqrt{\frac{\kappa}{L(\rho,\mu,\kappa)}} }  \ \forall k \geq 0.
\end{align}
Using this inequality it follows that (see also \cite{NesterovDoubleSmooth,NesterovDoubleSmoothInfinite})
\begin{equation}
		\label{opt-scheme:norm-of-arguments}
		\left\| p_k - p_{DS}^* \right\|^2
		\leq 	\min{ \left\{ \left\| p_0 - p_{DS}^* \right\|^2, \frac{4}{\kappa} (\theta_{\rho,\mu,\kappa}(p_0) - \theta_{\rho,\mu,\kappa}^*)
					\text{e}^{ -k \sqrt{\frac{\kappa}{L(\rho,\mu,\kappa)}} }\right\} }	\ \forall k \geq 0.																				 
\end{equation}
We will show as follows that the rates of convergence for the decrease of $\left\| \nabla \theta_{\rho,\mu}(p_k) \right\|$ and $\theta(p_k) + v(D)$ are the same, namely equal to $O\left(\frac{1}{\epsilon} \ln \left(\frac{1}{\epsilon}\right)\right)$. This will us allow to efficiently recover approximately optimal solutions to the initial optimization problem $(P)$.

\subsection{Convergence of \texorpdfstring{$\theta(p_k)$}{theta(pk)} to \texorpdfstring{$-v(D)$}{-v(D)}}\label{subsectionConvTheta}

Since $p_0 = 0$, we have
$$\theta_{\rho,\mu,\kappa}(0)=f_{\rho}^*(0)+g_{\mu}^*(0)+\frac{\kappa}{2} \left\| 0 \right\|^2 = \theta_{\rho,\mu}(0)$$
and
\begin{align}
	\label{inequality:theta-rho-mu-kappa in p_{DS}^*}
	\theta_{\rho,\mu,\kappa}(p_{DS}^*) = \theta_{\rho,\mu}(p_{DS}^*) + \frac{\kappa}{2} \left\| p_{DS}^* \right\|^2
\end{align}
and obtain
\begin{align*}
		\frac{\kappa}{2} \left\| p_{DS}^* \right\|^2 &\overset{\eqref{opt-scheme:norm-of-arguments-additional}}{\leq} \theta_{\rho,\mu,\kappa}(0) - \theta_{\rho,\mu,\kappa}(p_{DS}^*)																									 = \theta_{\rho,\mu}(0) - \theta_{\rho,\mu}(p_{DS}^*) - \frac{\kappa}{2} \left\| p_{DS}^* \right\|^2,
\end{align*}
which implies that
\begin{align}
		\label{inequality:norm-of-p_DS^*}
		\left\| p_{DS}^* \right\|^2 &\leq \frac{1}{\kappa} \left( \theta_{\rho,\mu}(0) - \theta_{\rho,\mu}(p_{DS}^*) \right).		
\end{align}
In addition, for all $k\geq 0$ it holds
\begin{align}
		\left\| p_k - p_{DS}^* \right\|^2
		&\overset{\eqref{opt-scheme:norm-of-arguments-additional}}{\leq} \frac{2}{\kappa} \left( \theta_{\rho,\mu,\kappa}(p_k) - \theta_{\rho,\mu,\kappa}(p_{DS}^*) \right) \notag \\
		&\overset{\eqref{opt-scheme:objective-function1}}{\leq} \frac{2}{\kappa} \left( \theta_{\rho,\mu,\kappa}(0) - \theta_{\rho,\mu,\kappa}(p_{DS}^*) + \frac{\kappa}{2} \left\| 0 - p_{DS}^* \right\|^2 \right) \text{e}^{ -k \sqrt{\frac{\kappa}{L(\rho,\mu,\kappa)}} } \notag \\
		&\overset{\eqref{inequality:theta-rho-mu-kappa in p_{DS}^*}}{=} \frac{2}{\kappa} \left( \theta_{\rho,\mu}(0) - \theta_{\rho,\mu}(p_{DS}^*) \right) \text{e}^{ -k \sqrt{\frac{\kappa}{L(\rho,\mu,\kappa)}} }
		\label{inequality:norm-of-p_k-minus-p_DS^*}
\end{align}
and
\begin{align}
	\label{inequality:theta-rho-mu estimate}
		\theta_{\rho,\mu}(p_k) - \theta_{\rho,\mu}(p_{DS}^*)
		&\overset{\eqref{opt-scheme:objective-function1}}{\leq} \left( \theta_{\rho,\mu,\kappa}(0) - \theta_{\rho,\mu,\kappa}(p_{DS}^*) + \frac{\kappa}{2} \left\| 0 - p_{DS}^* \right\|^2 \right)
						\text{e}^{ -k \sqrt{\frac{\kappa}{L(\rho,\mu,\kappa)}} } \notag \\
						& \hspace{0.8cm}+ \frac{\kappa}{2} \left( \left\| p_{DS}^* \right\|^2
						- \left\| p_k \right\|^2	\right) \notag \\
		&\overset{\eqref{inequality:theta-rho-mu-kappa in p_{DS}^*}}{=} \left( \theta_{\rho,\mu}(0) - \theta_{\rho,\mu}(p_{DS}^*)  \right)
						\text{e}^{ -k \sqrt{\frac{\kappa}{L(\rho,\mu,\kappa)}} } + \frac{\kappa}{2} \left( \left\| p_{DS}^* \right\|^2
						- \left\| p_k \right\|^2	\right).																					 
\end{align}
Investigating the last term in the estimate above, using $\left| \left\| p_{DS}^* \right\| - \left\| p_k \right\| \right| \leq \left\| p_{DS}^* - p_k \right\|$ and $\left\| p_k \right\| = \left\| p_k -p_{DS}^* + p_{DS}^* \right\| \leq \left\| p_k - p_{DS}^* \right\| + \left\| p_{DS}^* \right\|$, we get for all $k\geq 0$
\begin{eqnarray*}
		\left\| p_{DS}^* \right\|^2 - \left\| p_k \right\|^2
		&=& \left(\left\| p_{DS}^* \right\| - \left\| p_k \right\|\right)\left(\left\| p_{DS}^* \right\| + \left\| p_k \right\|\right) \\
		&\leq& \left\| p_{DS}^* - p_k \right\| \left( \left\| p_{DS}^* \right\| + \left\| p_k \right\| \right) \\
		&\leq& \left\| p_{DS}^* - p_k \right\| \left( 2\left\| p_{DS}^* \right\| + \left\| p_k - p_{DS}^* \right\| \right) \\
		&\overset{\eqref{opt-scheme:norm-of-arguments}}{\leq}& 3 \left\| p_{DS}^* - p_k \right\| \left\| p_{DS}^*\right\| \\
		&\overset{\eqref{inequality:norm-of-p_k-minus-p_DS^*}}{\leq}& 3 \left\| p_{DS}^* \right\|
							\sqrt{\frac{2}{\kappa} (\theta_{\rho,\mu}(0) - \theta_{\rho,\mu}(p_{DS}^*))} \, \text{e}^{ -\frac{k}{2} \sqrt{\frac{\kappa}{L(\rho,\mu,\kappa)}} } \\
		&\overset{\eqref{inequality:norm-of-p_DS^*}}{\leq}& 	\frac{3 \sqrt{2}}{\kappa} (\theta_{\rho,\mu}(0) - \theta_{\rho,\mu}(p_{DS}^*)) \, \text{e}^{ -\frac{k}{2} \sqrt{\frac{\kappa}{L(\rho,\mu,\kappa)}} }	.																						 
\end{eqnarray*}
Inserting this result into \eqref{inequality:theta-rho-mu estimate}, we obtain for all $k\geq 0$
\begin{align}
		\label{inequality:theta-rho-mu estimate2}
		\theta_{\rho,\mu}(p_k) - \theta_{\rho,\mu}(p_{DS}^*)
		&\leq \left( \theta_{\rho,\mu}(0) - \theta_{\rho,\mu}(p_{DS}^*)  \right)
						\left( \text{e}^{ -k \sqrt{\frac{\kappa}{L(\rho,\mu,\kappa)}}} + \frac{3}{\sqrt{2}} \, \text{e}^{ -\frac{k}{2} \sqrt{\frac{\kappa}{L(\rho,\mu,\kappa)}}} \right) \notag \\
		&\leq \frac{25}{8} \left( \theta_{\rho,\mu}(0) - \theta_{\rho,\mu}(p_{DS}^*)  \right) \text{e}^{ -\frac{k}{2} \sqrt{\frac{\kappa}{L(\rho,\mu,\kappa)}}}.																			
\end{align}
Further, we have $\theta_{\rho,\mu}(0) \overset{\eqref{inequality:theta-rho-mu and theta relation}}{\leq} \theta(0)$ and
\begin{align*}
		\theta_{\rho,\mu}(p_{DS}^*) \overset{\eqref{inequality:theta-rho-mu and theta relation}}{\geq} \theta(p_{DS}^*) - \rho D_f - \mu D_g
		\geq \theta(p^*) - \rho D_f - \mu D_g,
\end{align*}
and, from here,
\begin{align}
		\label{inequality:theta-0-p^*-estimate}
		\theta_{\rho,\mu}(0) - \theta_{\rho,\mu}(p_{DS}^*)
		\leq \theta(0) - \theta(p^*) + \rho D_f + \mu D_g.
\end{align}
Finally, since $\theta_{\rho,\mu}(p_{DS}^*) \leq \theta_{\rho,\mu}(p_{DS}^*) + \frac{\kappa}{2} \left\| p_{DS}^* \right\|^2 \leq \theta_{\rho,\mu}(p^*) + \frac{\kappa}{2} \left\| p^* \right\|^2 $, we conclude that
\begin{align*}
		\theta_{\rho,\mu}(p_{DS}^*) \leq \theta_{\rho,\mu}(p^*) + \frac{\kappa}{2} \left\| p^* \right\|^2
		\overset{\eqref{inequality:theta-rho-mu and theta relation}}{\leq} \theta(p^*) + \frac{\kappa}{2} \left\| p^* \right\|^2
\end{align*}
and, therefore, for all $k\geq 0$
\begin{align}
		\label{inequality:theta-estimate1}
		\theta_{\rho,\mu}(p_k) - \theta_{\rho,\mu}(p_{DS}^*)
		\overset{\eqref{inequality:theta-rho-mu and theta relation}}{\geq} \theta(p_k) - \rho D_f - \mu D_g - \theta(p^*) - \frac{\kappa}{2} \left\| p^* \right\|^2.
\end{align}
In conclusion, we obtain for all $k\geq 0$
\begin{eqnarray}
		\label{inequality:theta-estimate2}
		\theta(p_k) - \theta(p^*) &\overset{\eqref{inequality:theta-estimate1}}{\leq}&
															\rho D_f + \mu D_g + \frac{\kappa}{2} \left\| p^* \right\|^2 + \theta_{\rho,\mu}(p_k) - \theta_{\rho,\mu}(p_{DS}^*) \notag \\
		&\overset{\eqref{opt-problem:dual-solution-upper-bound}, \eqref{inequality:theta-rho-mu estimate2}}{\leq}&
				\rho D_f + \mu D_g + \frac{\kappa}{2} R^2 + \frac{25}{8} \left( \theta_{\rho,\mu}(0) - \theta_{\rho,\mu}(p_{DS}^*)  \right) \text{e}^{ -\frac{k}{2} \sqrt{\frac{\kappa}{L(\rho,\mu,\kappa)}}} \notag \\
		&\overset{\eqref{inequality:theta-0-p^*-estimate}}{\leq}&
				\rho D_f + \mu D_g + \frac{\kappa}{2} R^2 \notag \\ & &+ \frac{25}{8} \left( \theta(0) - \theta(p^*) + \rho D_f + \mu D_g  \right) \text{e}^{ -\frac{k}{2} \sqrt{\frac{\kappa}{L(\rho,\mu,\kappa)}}}.
\end{eqnarray}
Next we fix $\epsilon > 0$. In order to get $\theta(p_k) + v(D) \leq \epsilon$ for a certain amount of iterations $k$, we force all four terms in \eqref{inequality:theta-estimate2} to be less than or equal to $\frac{\epsilon}{4}$. Therefore, we choose
\begin{align}
	\label{opt-problem-smoothing-parameters}
	\rho := \rho(\epsilon) = \frac{\epsilon}{4 D_f}, \quad
	\mu := \mu(\epsilon) = \frac{\epsilon}{4 D_g}, \quad
	\kappa := \kappa(\epsilon) = \frac{\epsilon}{2 R^2}.
\end{align}
With these new parameters we can simplify \eqref{inequality:theta-estimate2} to
\begin{equation*}
		\theta(p_k) + v(D) \leq
		\frac{3 \epsilon}{4} + \frac{25}{8} \left( \theta(0) - \theta(p^*) + \frac{\epsilon}{2}  \right) \text{e}^{ -\frac{k}{2} \sqrt{\frac{\kappa}{L(\rho,\mu,\kappa)}}}.
\end{equation*}
As we see, the second term in the expression on the right-hand side of the above estimate determines the number of iterations which is needed to obtain $\epsilon$-accuracy for the dual objective function $\theta$. Indeed, we have
\begin{align}
		\frac{\epsilon}{4} &\geq \frac{25}{8} \left( \theta(0) - \theta(p^*) + \frac{\epsilon}{2}  \right) \text{e}^{ -\frac{k}{2} \sqrt{\frac{\kappa}{L(\rho,\mu,\kappa)}}} \notag \\
		\Leftrightarrow \text{e}^{ \frac{k}{2} \sqrt{\frac{\kappa}{L(\rho,\mu,\kappa)}}} &\geq  \frac{4}{\epsilon} \cdot \frac{25}{8} \left( \theta(0) - \theta(p^*) + \frac{\epsilon}{2}  \right) \notag \\
		\Leftrightarrow \frac{k}{2} \sqrt{\frac{\kappa}{L(\rho,\mu,\kappa)}} &\geq \ln{\left( \frac{25\left( \theta(0) - \theta(p^*) + \frac{\epsilon}{2}  \right)}{2 \epsilon}  \right)} \notag \\
		\Leftrightarrow k &\geq 2 \sqrt{\frac{L(\rho,\mu,\kappa)}{\kappa}} \ln{\left( \frac{25\left( \theta(0) - \theta(p^*) + \frac{\epsilon}{2}  \right)}{2 \epsilon}  \right)}  \label{doublesmooth-estimation-theta}
\end{align}
iterations. A closer look on $\frac{L(\rho,\mu,\kappa)}{\kappa}$ shows that
\begin{eqnarray*}
		\frac{L(\rho,\mu,\kappa)}{\kappa}
		= \frac{\left\| A \right\|^2}{\rho \kappa} + \frac{1}{\mu \kappa} + 1
		&\overset{\eqref{opt-problem-smoothing-parameters}}{=}&\frac{8 \left\| A \right\|^2 D_f R^2}{\epsilon^2} + \frac{8 D_g R^2}{\epsilon^2} + 1 \\
		&=& 1 + \frac{8 R^2}{\epsilon^2} \left( \left\| A \right\|^2 D_f + D_g \right),
\end{eqnarray*}
hence, in order to obtain an approximately optimal solution to $(D)$, we need $k=O\left(\frac{1}{\epsilon}\ln \left(\frac{1}{\epsilon}\right)\right)$ iterations.

\subsection{Convergence of \texorpdfstring{$\left\| \nabla\theta_{\rho,\mu}(p_k) \right\|$}{norm nabla-theta-rho-mu} to \texorpdfstring{0}{0}}\label{subsectionConvNablaTheta}

As it follows from \eqref{inequality:first-smoothing-primal-accuray}, guaranteeing $\epsilon$-optimality for the objective values of $\theta$ is not sufficient for solving the initial primal optimization problem with a good convergence rate in the absence of a similar behavior of $\|\nabla \theta_{\rho,\mu}(p_k)\| = \left\| Ax_{\rho,p_k} - x_{\mu,p_k} \right\|$. In the following we show that the fast gradient method \eqref{doublesmooth:opt-sheme} applied to the doubly regularized function $\theta_{\rho,\mu,\kappa}$ furnishes the desired properties for the decrease of $\left\| \nabla\theta_{\rho,\mu}(p_k) \right\|$ (see also \cite{NesterovDoubleSmooth, NesterovDoubleSmoothInfinite}).
Since
\begin{align*}
	\left\| p_k \right\| = \left\| p_k - p_{DS}^* +p_{DS}^* \right\| \leq \left\| p_k - p_{DS}^* \right\| + \left\| p_{DS}^* \right\|
		\overset{\eqref{opt-scheme:norm-of-arguments}}{\leq} 2 \left\| p_{DS}^* \right\| ,
\end{align*}
we have
\begin{align}
	\label{inequality:norm-of-theta-rho-mu}
	\left\| \nabla \theta_{\rho,\mu}(p_k) \right\| &= \left\| \nabla\theta_{\rho,\mu}(p_k) + \kappa p_k  - \kappa  p_k  \right\| = \left\| \nabla\theta_{\rho,\mu,\kappa}(p_k) - \kappa  p_k  \right\| \notag \\
	&\leq \left\|\nabla \theta_{\rho,\mu,\kappa}(p_k) \right\| + \left\| \kappa p_k \right\|
		= \left\| \nabla\theta_{\rho,\mu,\kappa}(p_k) \right\| + \kappa \left\| p_k \right\| \notag \\
		&\leq \left\| \nabla\theta_{\rho,\mu,\kappa}(p_k) \right\| + 2\kappa \left\| p_{DS}^* \right\|	\ \forall k \geq 0.
\end{align}
Having a closer look on the first term in the previous estimate one can notice that
\begin{align*}
	\left\| \nabla\theta_{\rho,\mu,\kappa}(p_k) \right\|^2 &\overset{\eqref{opt-scheme:norm-of-gradient}}{\leq}
					4 L(\rho,\mu,\kappa) (\theta_{\rho,\mu,\kappa}(0) - \theta_{\rho,\mu,\kappa}(p_{DS}^*)) \, \text{e}^{ -k \sqrt{\frac{\kappa}{L(\rho,\mu,\kappa)}} } \\
					&\overset{\eqref{inequality:theta-rho-mu-kappa in p_{DS}^*}}{\leq}
					4 L(\rho,\mu,\kappa) (\theta_{\rho,\mu}(0) - \theta_{\rho,\mu}(p_{DS}^*))\, \text{e}^{ -k \sqrt{\frac{\kappa}{L(\rho,\mu,\kappa)}} } \\
					&\overset{\eqref{opt-problem-smoothing-parameters}}{=}
					4 L(\rho,\mu,\kappa) \left(\theta(0) - \theta(p^*) + \frac{\epsilon}{2}\right)\, \text{e}^{ -k \sqrt{\frac{\kappa}{L(\rho,\mu,\kappa)}} },
\end{align*}
thus,
\begin{align}
	\label{inequality:norm-of-theta-rho-mu-kappa}
	\left\| \nabla\theta_{\rho,\mu,\kappa}(p_k) \right\| \leq
					2 \sqrt{L(\rho,\mu,\kappa) \left(\theta(0) - \theta(p^*) + \frac{\epsilon}{2}\right)}\, \text{e}^{ -\frac{k}{2} \sqrt{\frac{\kappa}{L(\rho,\mu,\kappa)}} }\ \forall k \geq 0.
\end{align}
Furthermore, in order to gain an upper bound for the norm of $p^*_{DS}$, we notice that
\begin{eqnarray*}
	\theta(p^*) + \frac{\kappa}{2} \left\| p^* \right\|^2
	&\overset{\eqref{inequality:theta-rho-mu and theta relation}}{\geq}& \theta_{\rho,\mu}(p^*) + \frac{\kappa}{2} \left\| p^* \right\|^2
	\geq \theta_{\rho,\mu}(p_{DS}^*) + \frac{\kappa}{2} \left\| p_{DS}^* \right\|^2 \\
	&\overset{\eqref{inequality:theta-rho-mu and theta relation}}{\geq}&
	\theta(p_{DS}^*) -\rho D_f -\mu D_g  + \frac{\kappa}{2} \left\| p_{DS}^* \right\|^2 \\
	&\geq&
	\theta(p^*) -\rho D_f -\mu D_g  + \frac{\kappa}{2} \left\| p_{DS}^* \right\|^2,
\end{eqnarray*}
which implies $\frac{\kappa}{2} \left\| p_{DS}^* \right\|^2 \leq \frac{\kappa}{2} \left\| p^* \right\|^2 + \rho D_f + \mu D_g$ or, equivalently,
\begin{align*}
	\left\| p_{DS}^* \right\|^2 \leq \left\| p^* \right\|^2 + \frac{2\rho}{\kappa}  D_f + \frac{2\mu}{\kappa}  D_g.
\end{align*}
Hence,
\begin{align}
	\label{inequality:norm of optimal solution p_DS^*}
	\left\| p_{DS}^* \right\|   \leq \sqrt{\left\| p^* \right\|^2 + \frac{2\rho}{\kappa}D_f + \frac{2\mu}{\kappa}D_g}
	\overset{\eqref{opt-problem-smoothing-parameters}}{=} \sqrt{\left\| p^* \right\|^2 + \frac{\epsilon}{2\kappa} + \frac{\epsilon}{2\kappa}}
	&\overset{\eqref{opt-problem-smoothing-parameters}}{=} \sqrt{\left\| p^* \right\|^2 + 2R^2} \notag \\
	&\overset{\eqref{opt-problem:dual-solution-upper-bound}}{\leq} \sqrt{3} R,
\end{align}
which, combined with \eqref{inequality:norm-of-theta-rho-mu} and \eqref{inequality:norm-of-theta-rho-mu-kappa}, provides the following estimate for the norm of the gradient of $\theta_{\rho,\mu}(p_k)$ for $k \geq 0$
\begin{align}
	\label{inequality:norm-final-theta-rho-mu}
	\left\| \nabla\theta_{\rho,\mu}(p_k) \right\| &\leq
	2 \sqrt{L(\rho,\mu,\kappa) \left(\theta(0) - \theta(p^*) + \frac{\epsilon}{2}\right)} \text{e}^{ -\frac{k}{2} \sqrt{\frac{\kappa}{L(\rho,\mu,\kappa)}} }
	+ 2\sqrt{3} \kappa R \notag \\
	&= 2 \sqrt{L(\rho,\mu,\kappa) \left(\theta(0) - \theta(p^*) + \frac{\epsilon}{2}\right)} \text{e}^{ -\frac{k}{2} \sqrt{\frac{\kappa}{L(\rho,\mu,\kappa)}} }	+ \frac{\sqrt{3}\epsilon}{R}.
\end{align}
For  $\epsilon > 0$ fixed, the first term in \eqref{inequality:norm-final-theta-rho-mu} decreases by the iteration counter $k$, while, in order to ensure that $\left\| \nabla\theta_{\rho,\mu}(p_k) \right\| \leq \frac{2 \epsilon}{R}$, we have to pass
\begin{align}
	\frac{2 \epsilon}{R} &\geq 2 \sqrt{L(\rho,\mu,\kappa) (\theta(0) - \theta(p^*) + \frac{\epsilon}{2})} \text{e}^{ -\frac{k}{2} \sqrt{\frac{\kappa}{L(\rho,\mu,\kappa)}} }	+ \frac{\sqrt{3}\epsilon}{R} \notag \\
 \Leftrightarrow	 \frac{(2-\sqrt{3}) \epsilon}{R} &\geq 2 \sqrt{L(\rho,\mu,\kappa) (\theta(0) - \theta(p^*) + \frac{\epsilon}{2})} \text{e}^{ -\frac{k}{2} \sqrt{\frac{\kappa}{L(\rho,\mu,\kappa)}}}	\notag \\
 \Leftrightarrow	 \text{e}^{ \frac{k}{2} \sqrt{\frac{\kappa}{L(\rho,\mu,\kappa)}}} &\geq \frac{2R\sqrt{L(\rho,\mu,\kappa) (\theta(0) - \theta(p^*) + \frac{\epsilon}{2})}}{(2-\sqrt{3}) \epsilon}  	\notag \\
	\Leftrightarrow	 \frac{k}{2} \sqrt{\frac{\kappa}{L(\rho,\mu,\kappa)}} &\geq \ln{ \left( \frac{\sqrt{4R^2L(\rho,\mu,\kappa) (\theta(0) - \theta(p^*) + \frac{\epsilon}{2})}}{(2-\sqrt{3}) \epsilon} \right) }  \notag \\
	\Leftrightarrow	 k  &\geq 2 \sqrt{\frac{L(\rho,\mu,\kappa)}{\kappa}} \ln{ \left( \frac{\sqrt{4R^2L(\rho,\mu,\kappa) (\theta(0) - \theta(p^*) + \frac{\epsilon}{2})}}{(2-\sqrt{3}) \epsilon} \right) } \notag \\
	\Leftrightarrow	 k  &\geq \frac{2}{\epsilon} \sqrt{\epsilon^2 + 8 R^2(\left\| A \right\|^2 D_f+D_g)} \notag \\
	& \ \ \ \cdot \ln{ \left( \frac{\sqrt{(2\epsilon^2+ 16R^2(\left\| A \right\|^2 D_f+D_g)) (\theta(0) - \theta(p^*) + \frac{\epsilon}{2})}}{(2-\sqrt{3}) \epsilon^{\frac{3}{2}}} \right) } \notag \\
	\Leftrightarrow	 k  &\geq \frac{3}{\epsilon} \sqrt{\epsilon^2 + 8 R^2( \left\| A \right\|^2 D_f+D_g)} \notag \\
	& \ \ \ \cdot \ln{ \left( \frac{\sqrt[3]{(2\epsilon^2+ 16R^2( \left\| A \right\|^2 D_f+D_g)) (\theta(0) - \theta(p^*) + \frac{\epsilon}{2})}}{(2-\sqrt{3})^{\frac{2}{3}} \epsilon} \right) }  \label{doublesmooth-estimation-norm-theta}
\end{align}
iterations of the fast gradient method  \eqref{doublesmooth:opt-sheme}. In the above estimate, we used that $\frac{L(\rho,\mu,\kappa)}{\kappa}= 1 + \frac{8R^2}{\epsilon^2}(\left\| A \right\|^2 D_f + D_g)$ and $L(\rho,\mu,\kappa)=\frac{4\left\| A \right\|^2 D_f}{\epsilon} + \frac{4D_g}{\epsilon} + \frac{\epsilon}{2R^2}$ (see \eqref{opt-problem-smoothing-parameters}).
Resuming the achievements in the last two subsections, it follows that $k=O\left(\frac{1}{\epsilon} \ln\left(\frac{1}{\epsilon}\right)\right)$ iterations are needed to guarantee
\begin{equation}
	\label{opt-scheme:convergence-for-dual-problem}
	\theta(p_k) + v(D) \leq \epsilon \ \mbox{and} \ \left\| \nabla \theta_{\rho,\mu}(p_k) \right\| \leq \frac{2\epsilon}{R}
\end{equation}
with a rate of convergence which is very similar except for constant factors.

\subsection{How to construct an approximately primal optimal solution}\label{subsectionApproximatePrimal}

Next, by making use of the approximate dual solution $p_k$, for $k \geq 0$, we construct an \textit{approximately primal optimal solution}  for the initial problem $(P)$ and investigate its accuracy. To this end we will make use of the sequences $(x_{\rho,p_k})_{k\geq0} \subseteq \dom f$ and $(x_{\mu,p_k})_{k\geq0} \subseteq \dom g$ which are delivered by the algorithmic scheme \eqref{doublesmooth:opt-sheme}. We will prove that, given a fixed accuracy $\epsilon > 0$, we are able to reconstruct an approximately primal optimal solution such that, for $\rho$ and $\mu$ chosen as in \eqref{opt-problem-smoothing-parameters}, one gets
\begin{align}
		\label{inequality:primal-solution-estimate}
		\left| f(x_{\rho,p_k}) + g(x_{\mu,p_k}) - v(D) \right| &\leq 2(1+2\sqrt{3})\epsilon, \\
		\label{inequality:primal-feasible-estimate}
		\left\| Ax_{\rho,p_k} - x_{\mu,p_k} \right\| &\leq \frac{2\epsilon}{R},
\end{align}
in the same number of iterations as needed in order to satisfy \eqref{opt-scheme:convergence-for-dual-problem}. Let $k:=k(\epsilon)$ be the smallest index with this property. By means of weak duality, i.\,e. $v(D) \leq v(P)$, \eqref{inequality:primal-solution-estimate} would imply that $f(x_{\rho,p_k}) + g(x_{\mu,p_k}) \leq v(P) + 2(1+2\sqrt{3})\epsilon$, which would further mean that $x_{\rho,p_k} \in \dom f$ and $x_{\mu,p_k} \in \dom g$ fulfilling \eqref{inequality:primal-solution-estimate} as well as \eqref{inequality:primal-feasible-estimate} can be seen as approximately optimal and feasible solutions to the primal optimization problem $(P)$ with an accuracy which is proportional to $\epsilon$.

Now let us prove the validity of the inequalities above. As $\nabla \theta_{\rho,\mu}(p_k) =Ax_{\rho,p_k} - x_{\mu,p_k}$, relation \eqref{inequality:primal-feasible-estimate} follows directly from \eqref{opt-scheme:convergence-for-dual-problem}. Thus, we have to prove only that \eqref{inequality:primal-solution-estimate} is true. To this aim, we notice first that, since $\theta_{\rho,\mu}(p_k) +v(D) \overset{\eqref{inequality:theta-rho-mu and theta relation}}{\leq} \theta(p_k) +v(D) \leq \epsilon$ and
\begin{align*}
		\theta_{\rho,\mu}(p_k) + v(D) &\overset{\eqref{inequality:theta-rho-mu and theta relation}}{\geq} \theta(p_k) - \rho D_f - \mu D_g + v(D)\\
		&\overset{\eqref{opt-problem-smoothing-parameters}}{=} \underbrace{\theta(p_k) + v(D)}_{\geq 0} -\frac{\epsilon}{2} \geq -\frac{\epsilon}{2},
\end{align*}
we have $\left| \theta_{\rho,\mu}(p_k) +v(D) \right| \leq \epsilon$. From \eqref{ineq:primal-dual} it follows
\begin{eqnarray*}
		\left| f(x_{\rho,p_k}) + g(x_{\mu,p_k}) -v(D) \right|
		&\leq& \left\| p_k \right\| \left\| \nabla \theta_{\rho,\mu}(p_k) \right\| + \epsilon + \rho D_f + \mu D_g  \\
		&\overset{\eqref{opt-problem-smoothing-parameters}}{\leq}& \left\| p_k \right\| \left\| \nabla \theta_{\rho,\mu}(p_k) \right\| + 2 \epsilon \\
		&\overset{\eqref{opt-scheme:convergence-for-dual-problem}}{\leq}& \frac{2\epsilon}{R} \left\| p_k \right\| +2\epsilon
\end{eqnarray*}
Further, in order to get an upper bound for $\left\| p_k \right\|$, we use that
\begin{align*}
	\left\| p_k \right\| = \left\| p_k + p_{DS}^* - p_{DS}^* \right\| \leq \left\| p_k -p_{DS}^* \right\| + \left\| p_{DS}^* \right\|
	\overset{\eqref{opt-scheme:norm-of-arguments}}{\leq} 2 \left\| p_{DS}^* \right\| \overset{\eqref{inequality:norm of optimal solution p_DS^*}}{\leq} 2\sqrt{3}R,
\end{align*}
and, finally, we obtain
$$\left| f(x_{\rho,p_k}) + g(x_{\mu,p_k}) -v(D) \right| \leq 4 \sqrt{3} \epsilon + 2 \epsilon = 2(2\sqrt{3}+1) \epsilon.$$

\subsection{Existence of an optimal solution}\label{subsectionExistence}

In this section we will study the convergence behavior of the primal sequences produced by the fast gradient method converge to an optimal solution of $(P)$ when $\epsilon \downarrow 0$. Let $(\epsilon_n)_{n\geq0} \subseteq \mathbb{R}_+$ be a decreasing sequence of positive scalars with $\lim_{n\rightarrow \infty}{\epsilon_n}=0$. For each $n\geq0$ we can make $k=k(\epsilon_n)$ iterations of the double smoothing algorithm  \eqref{doublesmooth:opt-sheme} with smoothing parameters $\rho_{\epsilon_n}$, $\mu_{\epsilon_n}$ and $\kappa_{\epsilon_n}$ given by \eqref{opt-problem-smoothing-parameters} in order to have \eqref{opt-scheme:convergence-for-dual-problem} satisfied. For $n \geq 0$ we denote
\begin{equation*}
	\bar{x}_n := x_{\rho_{\epsilon_n},p_{k(\epsilon_n)}}  \in \dom f \ \mbox{and} \ \bar{y}_n := x_{\mu_{\epsilon_n},p_{k(\epsilon_n)}}  \in \dom g.
\end{equation*}
Due to the boundedness of $\dom f$ and $\dom g$, there exist the subsequence of indices $(n_l)_{l\geq0} \subseteq (n)_{n\geq0}$, $\bar x \in \R^n$ and $\bar y \in \R^m$ such that
\begin{align*}
	\bar{x}_{n_l} \overset{l \rightarrow \infty}{\longrightarrow} \bar{x} \in \cl (\dom f) \ \mbox{and} \
	\bar{y}_{n_l} \overset{l \rightarrow \infty}{\longrightarrow} \bar{y} \in \cl (\dom g).
\end{align*}
In view of relation \eqref{inequality:primal-feasible-estimate} we obtain
\begin{align}
		\label{doublesmooth-feasible-estimate}
		0 \leq \left\| A\bar{x}_{n_l} - \bar{y}_{n_l} \right\| &\leq \frac{2\epsilon_{n_l}}{R},
\end{align}
for each $l\geq 0$. For $l \rightarrow +\infty$ in \eqref{doublesmooth-feasible-estimate} we get $A\bar{x}=\bar{y}$.
Furthermore, due to \eqref{inequality:primal-solution-estimate}, we have
\begin{align*}
	f(\bar{x}_{n_l}) + g(\bar{y}_{n_l}) \leq v(D) + 2(1+2\sqrt{3})\epsilon_{n_l} \quad \forall l \geq 0
\end{align*}
and, by using the lower semicontinuity of $f$ and $g$, we obtain
\begin{align*}
	f(\bar{x}) + g(A\bar{x}) \leq \liminf_{l \rightarrow \infty}{\left\{f(\bar{x}_{n_l}) + g(\bar{y}_{n_l})\right\}}
	\leq \lim_{l \rightarrow \infty}{\left\{v(D) + 2(1+2\sqrt{3})\epsilon_{n_l}\right\}} = v(D) \leq v(P).
\end{align*}
By taking into account that $v(P) < + \infty$, it follows that $\bar{x} \in \dom f$ and $A\bar{x} \in \dom g$, thus $\bar x$ is an optimal solution of the primal problem $(P)$.

\section{An example in image processing}\label{sectionExample}

In this section we are solving a linear inverse problem which arises in the field of signal and image processing by means of the double smoothing algorithm developed in the preceding sections. For a given matrix $A \in \mathbb{R}^{n \times n}$ describing a \textit{blur operator} and a given vector $b$ representing the \textit{blurred and noisy image} the task is to estimate the \textit{unknown original image} $x^*\in\mathbb{R}^n$ fulfilling
$$Ax=b.$$
To this end we solve the following nonsmooth $l_1$ regularized convex optimization problem
\begin{align*}
	\hspace{-1.8cm}(P) \quad \quad \inf_{x \in S}{\left\{ \left\| Ax-b \right\|_1 + \lambda \left\| x \right\|_1\right\}},
\end{align*}
where $S\subseteq \mathbb{R}^n$ is an $n$-dimensional cube representing the range of the pixels and $\lambda > 0$ is the regularization parameter. The problem to be solved can be equivalently written as
\begin{align*}
	\hspace{-1.8cm}(P) \quad \quad \inf_{x \in \mathbb{R}^n}{\left\{ f(x) + g(Ax)\right\}},
\end{align*}
for $f:\mathbb{R}^n \rightarrow \overline{\mathbb{R}}$, $f(x)=\lambda\left\| x \right\|_1 + \delta_{S}(x)$ and $g:\mathbb{R}^n \rightarrow \overline{\mathbb{R}}$, $g(y)=\left\|y-b \right\|_1+\delta_S(y)$ (one has that $A(S) \subseteq S$, since for $x \in S$ the pixels of the blurred picture $Ax$ have naturally the same range). Thus both functions $f$ and $g$ are proper, convex and lower semicontinuous and have bounded effective domains.

Since each pixel furnishes a greyscale value which is between $0$ and $255$, a natural approach for the convex set $S$ would be the $n$-dimensional cube $\left[0,255\right]^n \subseteq \mathbb{R}^n$. In order to reduce the Lipschitz constants  which appear in the developed approach, we scale all the pictures used within this section so that each of their  pixels ranges in the intervall $\left[0,\frac{1}{10}\right]$.

In this section we concretely look at the $256 \times 256$ \textit{cameraman test image}, which is part of the image processing toolbox in Matlab. The dimension of the vectorized and scaled cameraman test image is $n=256^2=65536$. By making use of the Matlab functions {\ttfamily imfilter} and {\ttfamily fspecial}, this image is blurred as follows:
\begin{lstlisting}[numbers=left,numberstyle=\tiny,frame=tlrb,showstringspaces=false]
H=fspecial('gaussian',9,4);   % gaussian blur of size 9 times 9
                              % and standard deviation 4
B=imfilter(X,H,'conv','symmetric');  % B=observed blurred image
                                     % X=original image		
\end{lstlisting}
In row $1$ the function {\ttfamily fspecial} returns a rotationally symmetric Gaussian lowpass filter of size $9 \times 9$ with standard deviation $4$. The entries of $H$ are nonnegative and their sum adds up to $1$. In row $3$ the function {\ttfamily imfilter} convolves the filter $H$  with the image $X\in \mathbb{R}^{256 \times 256}$ and outputs the blurred image $B\in \mathbb{R}^{256 \times 256}$ . The boundary option "symmetric" avoids dark edges for the blurred picture $B$ which normally appears after a convolution (provided that $X$ and $B$ have same dimensions).

Thanks to the rotationally symmetric filter $H$, the linear operator $A\in\mathbb{R}^{n \times n}$ given by the Matlab function {\ttfamily imfilter} is symmetric, too. Since each entry in $B$ can be seen as a convex combination of elements in $X$ with coefficients in $H$, we have $A(S)\subseteq S$.
The norm $\left\| A \right\|^2$ is not explicitly given and is estimated by $1$. After adding a zero-mean white Gaussian noise with standard deviation $10^{-4}$, we obtain the blurred and noisy image $b \in \mathbb{R}^n$ which is shown in Figure \ref{fig:cameraman}.
\begin{figure}[ht]
	\centering
	\includegraphics*[page=1, viewport= 58 318 560 553, width=0.8\textwidth]{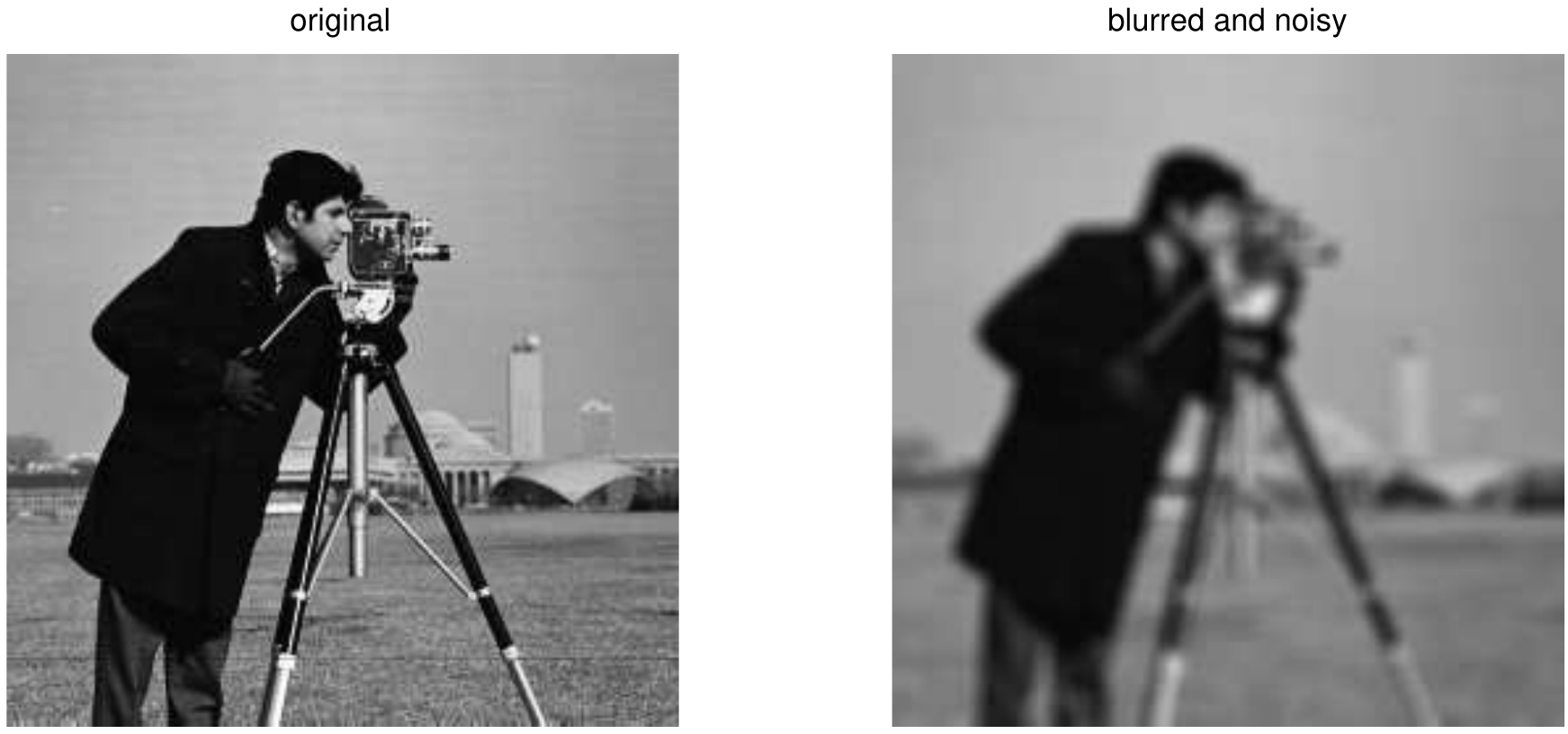}
	\caption{The $256 \times 256$ cameraman test image}
	\label{fig:cameraman}
\end{figure}

One should also notice that, as both functions occurring in the formulation of $(P)$ are nondifferentiable, the classical iterative shrinkage thresholding algorithm and its variants (see \cite{BeckTeboulle09, BeckTeboulle10, Daubechiesetal04}) cannot be taken into account for solving this optimization problem. Indeed, in this situation the double smoothing technique is our first choice for solving $(P)$ with an optimal first-order method.

The dual optimization problem in minimization form is
\begin{align*}
	\hspace{-1.8cm}(D) \quad \quad -\inf_{p \in \mathbb{R}^n}{\left\{f^*(A^*p)+g^*(-p)\right\}}
\end{align*}
and, due to the fact that $x':=\frac{1}{20}\mathbbm{1}^n \in \ri(S) \cap A(\ri(S))$, it has an optimal solution (see, for instance, \cite{Bot10,BotGradWanka09}). By taking into consideration \eqref{opt-problem-smoothing-parameters}, the smoothing parameters are taken
\begin{align}
	\label{example-smoothing-parameters}
	\rho = \frac{\epsilon}{4D_f}, \quad \mu = \frac{\epsilon}{4D_g}, \quad \kappa = \frac{\epsilon}{2R^2},
\end{align}
for $D_f = D_g = \sup{\left\{ \frac{\left\|x\right\|^2}{2} : x \in \left[0,\frac{1}{10}\right]^n \right\}} = 327.68$ and $R=0.05$, while the accuracy is chosen to be $\epsilon=0.01$.

In the following we show that the proximal points can be exactly calculated in each iteration of the algorithm, due to the fact that they occur as optimal solutions of some separable convex optimization problems. Indeed, since for $k \geq 0$
$${}^{\frac{1}{\rho}}f\left(\frac{A^*w_k}{\rho}\right) = \inf_{x\in \mathbb{R}^n}{\left\{ f(x) + \frac{\rho}{2}\left\|\frac{A^*w_k}{\rho} - x\right\|^2 \right\}} = \inf_{x\in \left[0,\frac{1}{10}\right]^n}{\left\{ \lambda \left\|x\right\|_1 + \frac{\rho}{2}\left\|\frac{A^*w_k}{\rho} - x\right\|^2 \right\}},$$
the proximal point of $f$ of parameter $\tfrac{1}{\rho}$ at  $\frac{A^*w_k}{\rho}$ fulfills
\begin{align*}
	 x_{\rho,w_k} = \argmin_{x\in \left[0,\frac{1}{10}\right]^n}{\left\{ \sum_{i=1}^{n}{\left[ \lambda \left|x_i\right| + \frac{\rho}{2} \left(\frac{(A^*w_k)_i}{\rho} - x_i \right)^2\right]}  \right\}}
\end{align*}
and its calculation requires the solving of the following one-dimensional convex optimization problem for $i=1,\ldots,n$:
\begin{align*}
	 \inf_{x_i\in \left[0,\frac{1}{10}\right]}{\left\{ \lambda x_i + \frac{\rho}{2} \left(\frac{(A^*w_k)_i}{\rho} - x_i \right)^2 \right\}},
\end{align*}
which has as unique optimal solution $\mathcal{P}_{\left[0,\frac{1}{10}\right]}\left( \frac{1}{\rho} \left( (A^*w_k)_i - \lambda \right) \right)$.
Thus,
$$x_{\rho,w_k} = \mathcal{P}_{\left[0,\frac{1}{10}\right]^n}\left( \frac{1}{\rho} \left( A^*w_k - \lambda \mathbbm{1}^n \right) \right).$$

On the other hand, since for $k \geq 0$
\begin{align*}
{}^{\frac{1}{\mu}}g\left(-\frac{w_k}{\mu}\right) &= \inf_{x\in \mathbb{R}^n}{\left\{ g(x) + \frac{\mu}{2}\left\|-\frac{w_k}{\mu} - x\right\|^2 \right\}} = \inf_{x\in \left[0,\frac{1}{10}\right]^n}{\left\{ \left\|x-b\right\|_1 + \frac{\mu}{2}\left\|-\frac{w_k}{\mu} - x\right\|^2 \right\}} \\
	&= \inf_{x\in \left[0,\frac{1}{10}\right]^n}{\left\{ \sum_{i=1}^n{\left[ \left| x_i - b_i \right| + \frac{\mu}{2}\left( -\frac{(w_k)_i}{\mu} - x_i\right)^2 \right]} \right\}},
\end{align*}
the calculation of the proximal point of $g$ of parameter $\tfrac{1}{\mu}$ at  $\frac{-w_k}{\mu}$  requires the solving of the following one-dimensional convex optimization problem for $i=1,\ldots,n$:
\begin{align*}
	 \inf_{x_i\in \left[0,\frac{1}{10}\right]}{\left\{ \left| x_i - b_i \right| + \frac{\mu}{2}\left( -\frac{(w_k)_i}{\mu} - x_i\right)^2 \right\}}.
\end{align*}
\begin{figure}[ht]	
	\centering
	\includegraphics*[page=1, viewport= 104 469 506 662, width=0.8\textwidth]{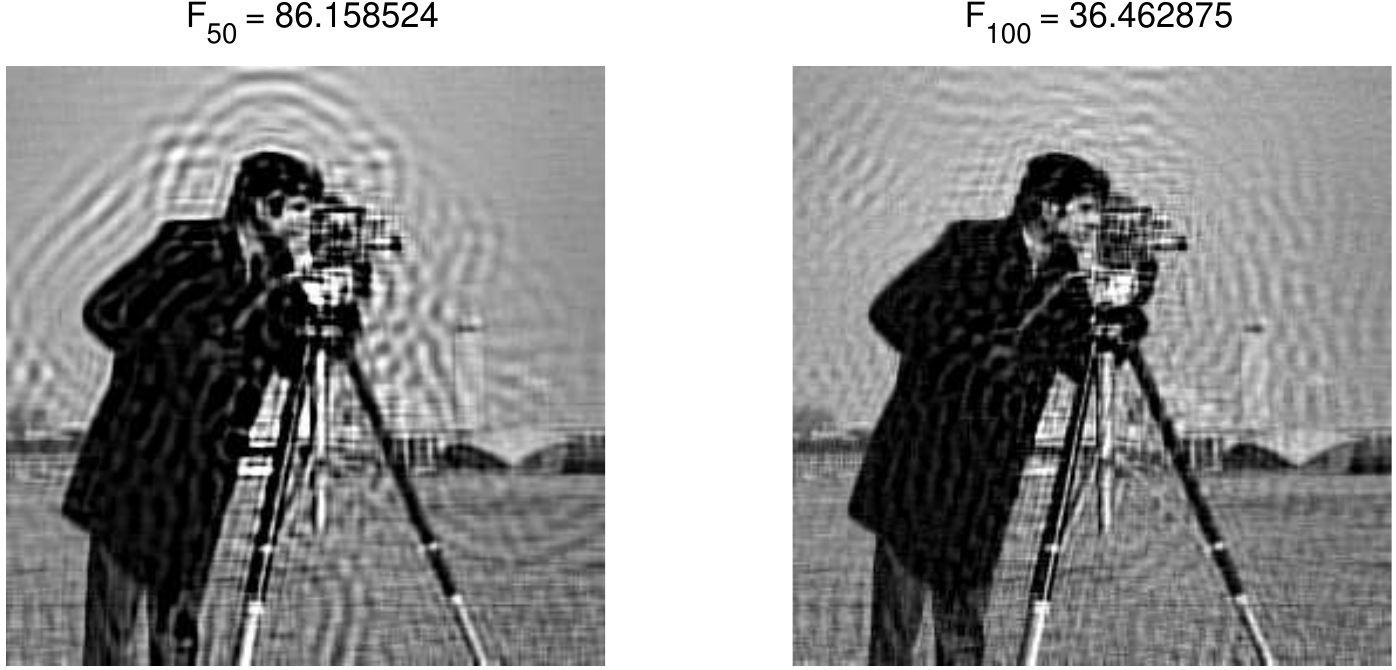}
	\includegraphics*[page=1, viewport= 104 216 506 418, width=0.8\textwidth]{fig_4cameramen.pdf}
	\caption{Iterations of the double smoothing algorithm}
	\label{fig:4cameramen}	
\end{figure}

 For a fixed $k \geq 0$ we consider for $i =1,...,n$ the function $h_i:\mathbb{R} \rightarrow \mathbb{R}$, $h_i(z)=\left| z - b_i \right| + \frac{\mu}{2}\left( -\frac{(w_k)_i}{\mu} - z\right)^2$. For for $i=1,...,n$ the optimal solution of the above problem is  the projection of the unique global minimum (cf. \cite[Proposition A.8 and Proposition B.10]{Bertsekas99}) $z_i$ of $h_i$ on $\left[0,\frac{1}{10}\right]$. For $i=1,...,n$ we have
\begin{align*}
	 0 \in \partial h_i(z_i) = \partial \left( \left| \cdot - b_i \right| + \frac{\mu}{2}\left( -\frac{(w_k)_i}{\mu} - \cdot \right)^2 \right)(z_i) = \partial \left( \left| \cdot - b_i \right| \right)(z_i) - \mu \left( -\frac{(w_k)_i}{\mu} - z_i \right),
\end{align*}
which is equivalent to
\begin{align*}
	-(w_k)_i \in \partial \left( \left| \cdot - b_i \right| \right)(z_i) + \mu z_i
	= \left\{ \begin{aligned} 1 + \mu z_i &: z_i > b_i \\ \left[ -1+\mu b_i,1+\mu b_i \right] &: z_i = b_i \\ -1 +\mu z_i &: z_i < b_i \end{aligned} \right..
\end{align*}
Hence, the unique global minimum $z_i$ can be calculated as follows
\begin{align*}
	z_i
	= \left\{ \begin{aligned} -\frac{(w_k)_i+1}{\mu} &: (w_k)_i < -1 -\mu b_i \\ b_i \hspace{0.7cm} &: -1-\mu b_i \leq (w_k)_i \leq 1-\mu b_i \\ \frac{1-(w_k)_i}{\mu} &: (w_k)_i > 1-\mu b_i \end{aligned} \right. .
\end{align*}
All in all, the proximal point of $g$ of parameter $\tfrac{1}{\mu}$ at  $\frac{-w_k}{\mu}$ is for $z=(z_1,...,z_n)^T$ given by
\begin{align*}
x_{\mu,w_k} = \mathcal{P}_{\left[0,\frac{1}{10}\right]^n}\left( z \right).
\end{align*}

\begin{figure}[ht]	
	\centering
	\includegraphics*[page=1, viewport= 26 249 583 596, width=1\textwidth]{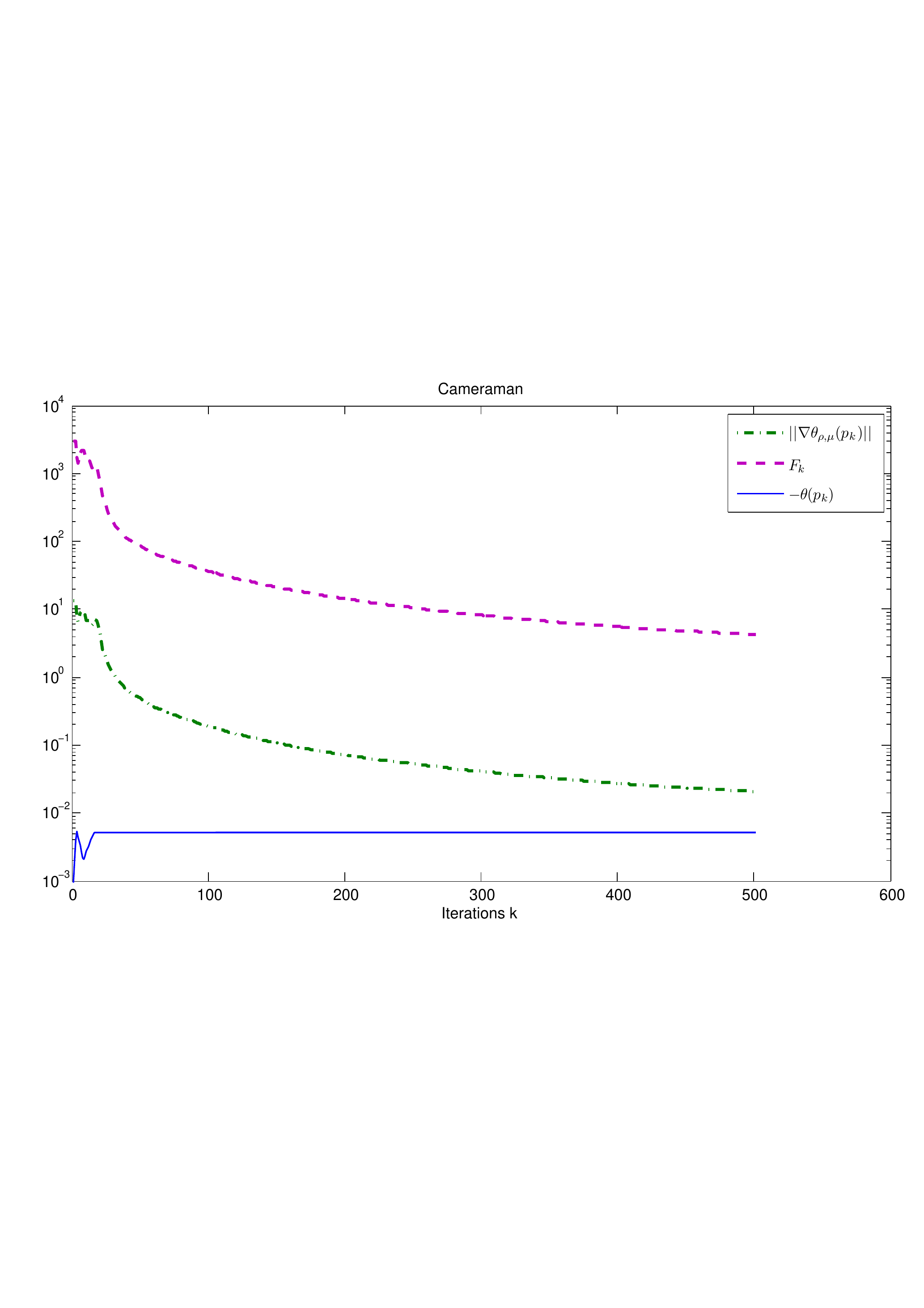}
	\caption{Convergence to an approximately optimal and feasible primal solution}
	\label{fig:l1_convergence}
\end{figure}

The iterations 50, 100, 200 and 500 of the double smoothing iterative scheme are shown in Figure \ref{fig:4cameramen} for $\lambda=2$e-$6$ and $F_k := f(x_{\rho,p_k}) + g(Ax_{\rho,p_k})$. The decrease of $F_k$ and $\left\|Ax_{\rho,p_k} - x_{\mu,p_k}\right\|$ can be seen in Figure \ref{fig:l1_convergence}. The function values of $-\theta(p_k)$ are shown in the latter as well.

\section{Conclusions}

The subject of this paper can be summarized as a development of a first-order method for solving unconstrained nondifferentiable convex optimization problems in finite dimensional spaces having as objective the sum of a convex function with the composition of another convex function with a linear operator.
The provided method assumes the minimization of the doubly regularized Fenchel dual objective and allows to reconstruct an approximately optimal primal solution in $O\left(\frac{1}{\epsilon} \ln\left( \frac{1}{\epsilon}\right)\right)$ iterations which outperforms the classical subgradient approach.

\end{document}